\newtheorem{theorem}{Theorem}[section]
\newtheorem{lemma}[theorem]{Lemma}
\newtheorem{corollary}[theorem]{Corollary}
\newtheorem*{claim*}{Claim}
\title{Variations on the sum-product problem}
\author{Brendan Murphy, Oliver Roche-Newton and Ilya Shkredov
\footnote{
The first author was supported by a WUN Researcher Mobility Grant, and would like to thank Bristol University for their hospitality while this research was conducted.
He would also like to thank the students of MTH 440 for their generous support during the Fall 2013 semester.
The second author was
partially supported by the Grant ERC-AdG 321104 and EPSRC Doctoral Prize Scheme (Grant Ref:  EP/K503125/1). 
The third author was supported by
grant mol\underline{ }a\underline{ }ved 12--01--33080,
Russian Government project 11.G34.31.0053,
Federal Program ``Scientific and scientific--pedagogical staff of innovative Russia" 2009--2013
and
grant Leading Scientific Schools N 2519.2012.1. }
}
\def\eps{\varepsilon}
\def\E{\mathsf {E}}
\def\Gr{{\mathbf G}}
\def\D{\Delta}
\def\m{\times}
\def\L{{\cal L}}
\def\R{{\mathbb R}}
\begin{document}
\maketitle

\begin{abstract} This paper considers various formulations of the sum-product problem. It is shown that, for a finite set $A\subset{\mathbb{R}}$,
$$|A(A+A)|\gg{|A|^{\frac{3}{2}+\frac{1}{178}}},$$
giving a partial answer to a conjecture of Balog. In a similar spirit, it is established that
$$|A(A+A+A+A)|\gg{\frac{|A|^2}{\log{|A|}}},$$
a bound which is optimal up to constant and logarithmic factors. We also prove several new results concerning sum-product estimates and expanders, for example, showing that
$$|A(A+a)|\gg{|A|^{3/2}}$$
holds for a typical element of $A$.  
\end{abstract}
\section{Introduction}

Given a finite set $A\subset{\mathbb{N}}$, one can define the \textit{sum set}, and respectively the \textit{product set}, by
$$A+A:=\{a+b:a,b\in{A}\}$$
and
$$AA:=\{ab:a,b\in{A}\}.$$
The Erd\H{o}s-Szemer\'{e}di \cite{ES} conjecture states, for all $\epsilon>0$,
$$\max{\{|A+A|,|AA|\}}\gg{|A|^{2-\epsilon}},$$
and it is natural to extend this conjecture to other fields, particularly the real numbers. 
In this direction, the current state-of-the-art bound, due to Solymosi \cite{solymosi}, states that for any $A\subset{\mathbb{R}}$
\begin{equation}
\max{\{|A+A|,|AA|\}}\gg{\frac{|A|^{4/3}}{(\log{|A|})^{1/3}}}.
\label{soly1}
\end{equation}
When looking to construct a set $A$ which generates a very small sum set $A+A$, one needs to impose an additive structure on $A$, and an additive progression is an example of a highly additively structured set. Similarly, if $A$ has a very small product set, it must be to some extent multiplicatively structured. Loosely speaking, the Erd\H{o}s-Szemer\'{e}di conjecture reflects the intuitive observation that a set of integers, or indeed real numbers, cannot be highly structured in both a multiplicative and additive sense.

In this paper, we consider other ways to quantify this observation. In particular, one would expect that a set will grow considerably under a combination of additive and multiplicative operations. Consider the set

$$A(A+A):=\{a(b+c):a,b,c\in{A}\}.$$

The same heuristic argument as the above leads us to expect that this set will always be large. Indeed, any progress towards the Erd\H{o}s-Szemer\'{e}di conjecture immediately yields a lower bound for the quantity $|A(A+A)|$. To see this, let us assume for simplicity that $0,1\in{A}$. This implies that $AA$ and $A+A$ are subsets of $A(A+A)$, and therefore, Solymosi's result \eqref{soly1} implies that

\begin{equation}
|A(A+A)|\gg{\frac{|A|^{4/3}}{(\log{|A|})^{1/3}}}.
\label{soly2}
\end{equation}

The expectation that $|A(A+A)|$ is always large was formalised by Balog\footnote{This conjecture was made as part of a talk at the conference ``Additive Combinatorics in Paris". A similar conjecture was made in \cite{balog} for the set $A+AA$.} \cite{balog}, who conjectured that, for all $\epsilon>0$,
$$|A(A+A)|\gg{|A|^{2-\epsilon}}.$$
Note that if $A=\{1,2,\cdots,N\}$, then
$$A(A+A)\subset{\{nm:n,m\in{[2N]}\}}.$$
This set obviously has cardinality $O(N^2)$, and in fact it is known that the product set determined by the first $N$ integers has cardinality $o(N^2)$.\footnote{See Ford \cite{ford} for a precise statement concerning the size of this product set.} Therefore, we cannot expect to prove anything stronger than this conjecture.

It is worth pointing out that Balog's conjecture is also close to being sharp in the dual case where $A$ is a geometric progression. Indeed, $A(A+A)\subset{AA+AA}$, and if $AA$ has cardinality $O(|A|)$, then $|AA+AA|=O(|A|^2)$.

By attacking the problem of establishing lower bounds on $|A(A+A)|$ directly (as opposed to applying Solymosi's sum-product estimate rather crudely), it is possible to obtain quantitatively improved results. Using a straightforward application of the Szemer\'{e}di-Trotter theorem\footnote{To the best of our knowledge, a proof of this does not appear in the existing literature. Exercise 8.3.3 in Tao-Vu \cite{TV} observes that $|AA+A|\gg{|A|^{3/2}}$, and this proof can easily be adapted to show that $|A(A+A)|\gg{|A|^{3/2}}$. These simple proofs are similar to those of the earlier sum-product estimates of Elekes \cite{elekes}.}, one can show that

\begin{equation}
|A(A+A)|\gg{|A|^{3/2}}.
\label{trivial}
\end{equation}

The original aim here was to improve on this lower bound, which we do by proving\footnote{The rough inequality $\gtrapprox$ is used here to suppress logarithmic and constant factors. See the forthcoming notation section for a precise definition of the meaning of this symbol.} that

\begin{equation}
|A(A+A)|\gtrapprox{|A|^{\frac{3}{2}+\frac{1}{178}}}.
\label{main}
\end{equation}

Although the method leads only to a small improvement for this problem, it turns out to be much more effective when more variables are involved. To this end we prove the following result:
\begin{equation}
|A(A+A+A+A)|\gg{\frac{|A|^2}{\log{|A|}}}.
\label{A(A+A+A+A)}
\end{equation}

Observe that this bound is tight, up to logarithmic factors, in the case when $A$ is an arithmetic progression. Indeed, the aforementioned work of Ford tells us that some logarithmic factor is necessary here. The set $A(A+A+A+A)$ has similar characteristics to $A(A+A)$, and inequality \eqref{A(A+A+A+A)} proves a weak version of Balog's conjecture.

The main tool in this paper is the Szemer\'{e}di-Trotter theorem, although its application is a little more involved than the straightforward application which gives the bound \sloppy{${|A(A+A)|\gg{|A|^{3/2}}}$}. To be more precise, we use an application of the Szemer\'{e}di-Trotter theorem to establish our main lemma, which bounds the cardinality of $A(A+A)$ in terms of the \textit{multiplicative energy} of $A$. The multiplicative energy, denoted $\E^*(A)$, is the number of solutions to the equation
\begin{equation}
a_1a_2=a_3a_4,
\label{energydefn}
\end{equation}
such that $a_1,a_2,a_3,a_4\in{A}$. This quantity has been an important feature in some of the existing bounds for the sum-product problem (see \cite{solymosi} and \cite{solymosi2}).

Of particular importance in this paper is the forthcoming Lemma \ref{mainlemma}, which gives an improvement to \eqref{trivial} unless the multiplicative energy is almost as large as possible. However, in the case where the multiplicative energy is very large, the Balog-Szemer\'{e}di-Gowers Theorem implies the existence of a large subset $A'\subset{A}$ with the property that the ratio set\footnote{The ratio set $A:A$ determined by $A$ is the set of all pairwise ratios; that is the set $\{a/b:a,b\in{A}\}$.} $A':A'$ is small. We can then use a sum-product estimate from \cite{LiORN2} to get an improvement to \eqref{trivial}. This gives a sketch of the proof of \eqref{main}.

Another variation of the sum-product problem is to consider product sets of additive shifts, which we might expect to be large. It was shown by Garaev and Shen \cite{GS} that for a finite set $A\subset{\mathbb{R}}$, one has $|A(A+1)|\gg{|A|^{5/4}}$, and this bound was improved slightly in \cite{TimORN}. Note that the value $1$ is not important here, and these results hold if $1$ is replaced in the statement by any non-zero value. The problem of determining the best possible lower bound for the size of $A(A+1)$ remains open.

We will prove several bounds which relate to this problem, as well as the problem of finding better lower bounds for $|A(A+A)|$.  
For example, in the forthcoming Theorem \ref{translates1}, it will be established that, for at least half of the elements $a\in{A}$ we have
\begin{equation}
|A(A+a)|\gg{|A|^{3/2}}.
\label{example}
\end{equation}
Note that this result reproves the bound \eqref{trivial}, but using two variables as opposed to three.

\subsection*{Structure of this paper}

The rest of this paper is structured as follows. We conclude this introductory section by explaining some of notation that will be used. In section 2, we give a full list of the new results in this paper. Section 3 gives proofs of the main preliminary results, all of which follow from the Szemer\'{e}di-Trotter theorem. Section 4 provides proofs of the main results - including \eqref{main} and \eqref{A(A+A+A+A)}. In section 5, we prove several other results concerning growth of sets under additive and multiplicative operations; this includes \eqref{example} and several results in a similar spirit.
It will be necessary to call upon some results from earlier works, such as the Szemer\'{e}di-Trotter Theorem and the Balog-Szemer\'{e}di-Gowers Theorem; any such results will be stated as and when they are needed.

\subsection*{Notation}

Throughout the paper, the standard notation
$\ll,\gg$ and, respectively, $O,\Omega$ is applied to positive quantities in the usual way. Saying, $X\gg Y$ or $X=\Omega(Y)$ means that $X\geq cY$, for some absolute constant $c>0$. We write $X\approx{Y}$ if both $X\gg{Y}$ and $X\ll{Y}$. The notation $\gtrapprox$ is occasionally used to suppress both constant and logarithmic factors. To be more precise, we write $X\gtrapprox{Y}$ if there exist positive constants $C$ and $c$ such that $X\geq{c\frac{Y}{(\log{X})^C}}$.  All logarithms in this paper are to base 2.

Let $A,B\subset{\mathbb{R}\setminus{\{0\}}}$ be finite sets\footnote{Note that the assumption that $0\notin{A}$ is merely added to avoid the inconvenience of the possibility of dividing by zero, and simplifies matters slightly. All of the bounds in this paper are unaffected; we may simply start all proofs by deleting zero and apply the analysis to $A':=A\setminus{\{0\}}$, with only the implied constants being affected.}.
We have already defined the sum set $A+B$ and the product set $AB$.
 
The \emph{difference set $A-B$}\/ and the \emph{ratio set $A:B$}\/ are defined by
\[
A-B=\{a-b\colon a\in A,b\in B\}\quad\mbox{and}\quad A:B=\{a/b\colon a\in A,b\in B\}.
\]

Given $x\in{\mathbb{R}}$, we use the notation $r_{A+B}(x)$ to denote the number of representations of $x$ as an element of $A+B$. To be precise
$$r_{A+B}(x):=|\{(a,b)\in{A\times{B}}:a+b=x\}|.$$
This notation will be used flexibly throughout the paper to define the number of representations of $x$ as an element of a given set described in the subscript. For example,
$$r_{A(B+C)}:=|\{(a,b,c)\in{A\times{B}\times{C}}:a(b+c)=x\}|.$$

In a slight generalisation of the earlier definition, the \textit{multiplicative energy} of $A$ and $B$, denoted $\E^*(A,B) = \E^*_2 (A,B)$, is defined to be the number of solutions to the equation
$$a_1b_1=a_2b_2,$$
such that $a_i\in{A}$ and $b_i\in{B}$. This quantity is also the number of solutions to
$$\frac{a_1}{a_2}=\frac{b_2}{b_1}$$
and
$$\frac{a_1}{b_2}=\frac{a_2}{b_1}.$$
Observe that $\E^*(A,B)$ can also be defined in terms of the representation function $r$ as follows:
\begin{align*}
\E^*(A,B)&=\sum_{x}r_{A:B}^2(x)\\
&=\sum_xr_{A:A}(x)r_{B:B}(x)\\
&=\sum_xr_{AB}^2(x).
\end{align*}

We use $\E^*(A)$ as a shorthand for $\E^*(A,A)$.

One of the fundamental basic properties of the multiplicative energy is the following well-known lower bound:
\begin{equation}
\E^*(A,B)\geq{\frac{|A|^2|B|^2}{|AB|}}.
\label{CS}
\end{equation}
The proof is short and straightforward, arising from a single application of the Cauchy-Schwarz inequality. The full details can be seen in Chapter 2 of \cite{TV}.

The above definitions can all be extended in the obvious way to define the \textit{additive energy} of $A$ and $B$, denoted $\E^+(A,B)$. So,

$$\E^+(A,B):=\sum_xr_{A-B}^2(x).$$

The third \textit{moment multiplicative energy} is the quantity
$$\E^*_3(A):=\sum_xr_{A:A}^3(x),$$
and similarly, the \textit{third moment additive energy} is defined by
$$\E^+_3(A):=\sum_xr_{A-A}^3(x).$$

In recent years, third moment energy has played an important role in quantitative progress on various problems in arithmetic combinatorics. See for example \cite{TimORN}, \cite{LiORN2}, \cite{SS1},\cite{SS2},\cite{SS3},
\cite{SV} and \cite{TV}.

We will use the Katz--Koester trick \cite{kk}, which is the observation that
$$
    |(A + A) \cap (A + A - s)| \ge |A + A_{s}| \,,
$$
and
$$
    |(A - A) \cap (A - A - s)| \ge |A - (A\cap (A+s))| \,,
$$
where $A_s = A\cap (A-s)$. We also need the following identity (see \cite{SV}, Corollary 2.5)
\begin{equation}\label{f:A^2-D(A)}
    \sum_s |A \pm A_s| = |A^2 \pm \Delta(A)| \,,
\end{equation}
where
$$
    \Delta(A) = \{ (a,a) ~:~ a\in A \} \,.
$$

\section{Statement of results}

\subsection{Preliminary Results - Applications of the Szemer\'{e}di-Trotter Theorem}

The most important ingredient for the sum-product type results in this paper is the Szemer\'{e}di-Trotter Theorem \cite{ST}:
\begin{theorem} \label{ST1} Let $P\subset{\mathbb{R}^2}$ be a finite set of points and let $L$ be a collection of lines in the real plane. Then
$$I(P,L):=|\{(p,l)\in{P\times{L}}:p\in{l}\}|\ll{|P|^{2/3}|L|^{2/3}+|L|+|P|}.$$
\end{theorem}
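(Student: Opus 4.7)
The plan is to use Székely's elegant proof via the crossing number inequality of Ajtai--Chvátal--Newborn--Szemerédi (and, independently, Leighton): for any simple graph $G$ drawn in the plane with $n$ vertices and $e \geq 4n$ edges, the crossing number satisfies $\mathrm{cr}(G) \gg e^3/n^2$. I would quote this as a black box, since its own proof requires Euler's formula plus a random-deletion trick to upgrade the elementary bound $\mathrm{cr}(G) \geq e - 3n + 6$, which is too weak to be used directly.

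First, I would build a topological graph $G$ on vertex set $P$ as follows: for each line $\ell \in L$ meeting $k_\ell := |P \cap \ell|$ points of $P$, draw the $\max(k_\ell - 1, 0)$ edges joining consecutive points of $P \cap \ell$ along $\ell$, realised as straight-line segments. The total edge count is then
$$e(G) = \sum_{\ell \,:\, k_\ell \geq 1}(k_\ell - 1) \;\geq\; I(P,L) - |L|,$$
since each line with at least one incidence is undercounted by exactly one. The crucial topological observation is that any crossing in this drawing occurs at an intersection point of two distinct lines of $L$, and distinct lines meet in at most one point, so $\mathrm{cr}(G) \leq \binom{|L|}{2} < |L|^2$.

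Second, I would split into two regimes based on the size of $e(G)$. If $e(G) < 4|P|$, then $I(P,L) \leq e(G) + |L| < 4|P| + |L|$, immediately yielding the $|P| + |L|$ part of the bound. Otherwise the crossing number inequality is applicable, giving $|L|^2 \gg e(G)^3 / |P|^2$, hence $e(G) \ll |P|^{2/3}|L|^{2/3}$, and therefore $I(P,L) \ll |P|^{2/3}|L|^{2/3} + |L|$. Combining the two cases produces the claimed bound.

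The main obstacle, strictly speaking, is not in assembling the argument above but in the crossing number inequality itself, whose proof via a random induced subgraph is the only nontrivial input. Everything else is bookkeeping: the edge count lower bound, the at-most-one-intersection observation, and the case split. An alternative path would be the original Szemerédi--Trotter cell-decomposition proof, or Kaplan--Matoušek--Sharir--Sharan's polynomial partitioning variant, but both are technically heavier than Székely's approach for essentially the same result.
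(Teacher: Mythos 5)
The paper states Theorem \ref{ST1} as a known result, citing Szemer\'edi--Trotter \cite{ST}, and does not include a proof; so there is nothing internal to compare against. Your argument is the standard Sz\'ekely crossing-number proof, and it is correct and complete as written. The graph you build on $P$ is indeed simple, since two distinct points of $P$ determine a unique line and therefore cannot be joined by edges coming from two different lines; the edge-count lower bound $e(G) \geq I(P,L) - |L|$ is exactly right; and the observation that each crossing is pinned to the unique intersection point of a pair of distinct lines of $L$ gives $\mathrm{cr}(G) \leq \binom{|L|}{2}$ cleanly (different pairs of lines meet at different points, and at most one edge from each line passes through a given point, so each unordered pair of lines contributes at most one crossing). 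The case split according to whether $e(G) < 4|P|$ then closes the argument. This route differs from the original cell-decomposition proof in the cited reference \cite{ST} and from the polynomial-partitioning variants; it is shorter and more conceptual at the cost of importing the Ajtai--Chv\'atal--Newborn--Szemer\'edi/Leighton crossing-number inequality as a black box, which is exactly the trade-off you flagged.
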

Here by $I(P,L)$ we denote the number of incidences between a set of points $P$ and a set of lines $L$.
Given a set of lines $L$, we call a point that is incident to at least $t$ lines of $L$ a \emph{$t$-rich point}, and we let $P_t$ denote the set of all $t$-rich points of $L$. 
The Szemer\'edi-Trotter theorem implies a bound on the number of $t$-rich points:
\begin{corollary} \label{STcor} Let $L$ be a collection of lines in $\mathbb{R}^2$, let $t\geq{2}$ be a parameter and let $P_t$ be the set of all $t$-rich points of $L$.
Then
$$|P_t|\ll{\frac{|L|^2}{t^3}+\frac{|L|}{t}}.$$
Further, if no point of $P_t$ is incident to more than $|L|^{1/2}$ lines, then %
 
\[
|P_t|\ll\frac{|L|^2}{t^3}.
\]
\end{corollary}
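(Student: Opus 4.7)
The corollary is the standard ``$t$-rich points'' consequence of Szemer\'edi--Trotter, so the plan is essentially a double counting of incidences.

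The plan is to combine two lower and upper estimates for $I(P_t,L)$. First, by the very definition of $P_t$, every point of $P_t$ lies on at least $t$ lines of $L$, so
\[
I(P_t,L)\ge t|P_t|.
\]
On the other hand, applying Theorem \ref{ST1} to the pair $(P_t,L)$ gives
\[
I(P_t,L)\ll |P_t|^{2/3}|L|^{2/3}+|L|+|P_t|.
\]
Combining these and using $t\ge 2$ to absorb the linear term $|P_t|$ on the right into the left (since the $\ll$ hides an absolute constant, one may assume $t$ exceeds that constant; otherwise the bound is trivial), we obtain
\[
t|P_t|\ll |P_t|^{2/3}|L|^{2/3}+|L|.
\]

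From here the first bound follows by a standard max-of-two-terms argument. Either the first summand on the right dominates, in which case $t|P_t|\ll |P_t|^{2/3}|L|^{2/3}$ rearranges to $|P_t|\ll |L|^2/t^3$, or the second does, in which case $|P_t|\ll |L|/t$. Taking the sum gives the stated inequality $|P_t|\ll |L|^2/t^3+|L|/t$.

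For the refined bound, I would observe that the additional hypothesis---no point of $P_t$ is incident to more than $|L|^{1/2}$ lines---forces $P_t=\emptyset$ whenever $t>|L|^{1/2}$, since a point in $P_t$ lies on at least $t$ lines. So it suffices to bound $|P_t|$ in the range $2\le t\le |L|^{1/2}$. But in that range $t^2\le |L|$, which gives $|L|/t\le |L|^2/t^3$, so the linear term is subsumed and we are left with $|P_t|\ll |L|^2/t^3$. No step here should be an obstacle; the only mild subtlety is making sure the $|P_t|$ term from Szemer\'edi--Trotter is legitimately absorbed, which is a routine matter of choosing the implied constant.
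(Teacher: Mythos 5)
Your proof is correct and uses the standard double-counting argument (lower bound $I(P_t,L)\ge t|P_t|$ combined with Theorem~\ref{ST1}, then absorb the linear term and compare the two remaining terms); the paper itself states Corollary~\ref{STcor} without proof, as a routine consequence of Szemer\'edi--Trotter, and your argument is exactly the one being left implicit. The treatment of the second part---noting that the extra hypothesis kills $P_t$ when $t>|L|^{1/2}$ and that for $t\le|L|^{1/2}$ the term $|L|/t$ is dominated by $|L|^2/t^3$---is likewise the intended reasoning.
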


This result is used to prove the main preliminary results in this paper, which give us information about various kinds of energies.

\begin{lemma} \label{sum+} Let $A,B$ and $X$ be finite subsets of $\mathbb{R}$ such that $|X|\leq |A||B|$. Then
\[
\sum_{x\in X}\E^+(A,xB)\ll |A|^{3/2}|B|^{3/2}|X|^{1/2}.
\]
\end{lemma}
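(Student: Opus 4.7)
The strategy is to convert this energy sum into a count of collinearities among the points of $P := B\times A$ along lines with slopes in $X$, and then apply the Szemer\'edi--Trotter theorem. Expanding the definition, for $x\neq 0$ we have $\E^+(A,xB) = |\{(a_1,b_1,a_2,b_2)\in (A\times B)^2 : a_1 - a_2 = x(b_1 - b_2)\}|$; if $0\in X$ it contributes only $|A|$, which is absorbed, so we may assume $0\notin X$. Splitting the resulting $5$-tuple count by whether the points $(b_1,a_1)$ and $(b_2,a_2)$ of $P$ coincide, the diagonal contributes at most $|A||B||X|\le|A|^{3/2}|B|^{3/2}|X|^{1/2}$ (using $|X|\le|A||B|$). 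For a pair of distinct points in $P$, the equation says precisely that the line through them has slope $x\in X$. So if $L$ is the set of lines with slope in $X$ containing at least two points of $P$, and $n_\ell := |\ell\cap P|$, then the off-diagonal count is at most $\sum_{\ell\in L}n_\ell^2$.

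Next I would perform a dyadic decomposition $L = \bigsqcup_j L_j$ with $n_\ell\in[2^j,2^{j+1})$ for $\ell\in L_j$, reducing the task to bounding $\sum_j 2^{2j}|L_j|$. Two complementary bounds on $|L_j|$ are available. First, for each fixed slope $x\in X$, the $2^j$-rich lines of slope $x$ are parallel and hence disjoint in $P$, so there are at most $|A||B|/2^j$ of them per slope; summing over $X$ gives $|L_j|\le|X||A||B|/2^j$. Second, the dual of Theorem~\ref{ST1} (i.e. the bound on the number of lines containing at least $2^j$ points of $P$) yields $|L_j|\ll|A|^2|B|^2/2^{3j}+|A||B|/2^j$.

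The two main terms balance at $t^* := (|A||B|/|X|)^{1/2}$, which is at least $1$ under the hypothesis. For $2^j\le t^*$ use the parallel bound: $2^{2j}|L_j|\ll 2^j|X||A||B|$, and summing the geometric series gives $\ll t^*|X||A||B| = |X|^{1/2}|A|^{3/2}|B|^{3/2}$. For $2^j\ge t^*$ use the Szemer\'edi--Trotter bound: the main term produces $2^{2j}|L_j|\ll|A|^2|B|^2/2^j$, which sums to $\ll|A|^2|B|^2/t^* = |X|^{1/2}|A|^{3/2}|B|^{3/2}$; the secondary term $|A||B|/2^j$ contributes $2^j|A||B|$, which sums to at most $|A||B|\min(|A|,|B|)\le|A|^{3/2}|B|^{3/2}$ (since $n_\ell\le\min(|A|,|B|)$), and is again of the required order.

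The main obstacle is the geometric reformulation, which is what allows $x$ to be indexed as the slope of a secant of $P$, together with the choice of the balancing threshold $t^*$ to combine the parallel-lines estimate with the Szemer\'edi--Trotter bound. Once these are in place the remaining dyadic summation is routine.
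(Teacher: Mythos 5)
Your proof is correct, and it takes a genuinely dual route to the paper's. The paper treats the pairs $(x,y)$, $x\in X$, as \emph{points} and the pairs $(a,b)\in A\times B$ as the \emph{lines} $y=ax+b$; it then bounds the number of $t$-rich points via Corollary~\ref{STcor} (using the observation $r_{A+xB}(y)\le(|A||B|)^{1/2}$ to drop the secondary term), and handles the low-multiplicity regime with the trivial truncation $\sum_{r\le\Delta}r^2\le\Delta\sum r=\Delta|A||B|$. You instead fix the point set $P=B\times A$ and count collinear ordered pairs along secants of slope in $X$, bounding the number of $t$-rich \emph{lines} via the dual of Szemer\'edi--Trotter, and handle the low-multiplicity regime with the nice parallel-lines observation $|L_j|\le|X||A||B|/2^j$ (lines of a fixed slope partition $P$). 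The threshold $t^*=(|A||B|/|X|)^{1/2}$ you pick plays exactly the same role as the paper's $\Delta$, and your treatment of the secondary term (bounding the top of the dyadic range by $n_\ell\le\min(|A|,|B|)$) is a correct substitute for the paper's trick of dropping it via the $|L|^{1/2}$-degree bound. The two proofs encode the same incidence structure; yours buys a slightly more geometric picture at the cost of needing the dual ST corollary plus the parallel-slope count, while the paper's avoids the parallel-lines lemma but must appeal to the ``no point on more than $|L|^{1/2}$ lines'' refinement.
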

Note that $\E^+(A,xB)\geq |A||B|$ for all $x$, so the condition $|X|\leq|A||B|$ is necessary.
Bourgain formulated a similar theorem (``Theorem C'' of \cite{bourgain}) for subsets of fields with prime cardinality.
Bourgain's theorem is closely related to the Szemer\'edi-Trotter theorem for finite fields \cite{dvir,HelfgottRudnev}.

This result works in the same way with the roles of addition and multiplication reversed.

\begin{lemma} \label{sum*} Let $A,B$ and $X$ be finite subsets of $\mathbb{R}$ such that $|X|\leq{|A||B|}$. Then
$$\sum_{x\in{X}}\E^*(A,x+B)\ll{|A|^{3/2}|B|^{3/2}|X|^{1/2}}.$$
\end{lemma}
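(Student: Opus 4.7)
The plan is to mirror the proof of Lemma \ref{sum+}, interchanging the roles of addition and multiplication, and to convert the sum into an incidence problem to which Szemer\'{e}di--Trotter can be applied.

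First I would unpack the energy combinatorially. For each $x \in X$, $\E^*(A, x+B)$ counts quadruples $(a_1,b_1,a_2,b_2) \in A^2 \times B^2$ with $a_1(x+b_1)=a_2(x+b_2)$. Quadruples with $a_1 = a_2$ force $b_1 = b_2$ (using $0 \notin A$) and contribute in total $|A||B||X|$; the hypothesis $|X| \leq |A||B|$ already makes this at most $|A|^{3/2}|B|^{3/2}|X|^{1/2}$. For $a_1 \neq a_2$, the equation rearranges to
\[
-x = \frac{a_1 b_1 - a_2 b_2}{a_1 - a_2},
\]
which is the slope of the line joining the distinct points $(a_1, a_1 b_1)$ and $(a_2, a_2 b_2)$ in $\mathbb{R}^2$. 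This motivates setting $Q := \{(a, ab) : a \in A,\, b \in B\}$ (of size $|A||B|$, since $0 \notin A$ makes $(a,b) \mapsto (a,ab)$ injective), letting $L$ be the collection of lines of slope in $-X$ incident to at least two points of $Q$, and writing $n(\ell) := |\ell \cap Q|$. The slope identification above gives
\[
\sum_{x \in X} \E^*(A,x+B) \;\leq\; |A||B||X| \;+\; \sum_{\ell \in L} n(\ell)^2,
\]
so it suffices to bound $\sum_\ell n(\ell)^2$ by $|A|^{3/2}|B|^{3/2}|X|^{1/2}$.

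For this I would combine two estimates on the number $|L_t|$ of lines in $L$ carrying at least $t$ points of $Q$. Since parallel lines of any fixed slope partition $Q$, pigeonhole yields $|L_t| \leq |X||Q|/t$. On the other hand, a line with slope $m$ and intercept $c$ passing through $(a,ab) \in Q$ satisfies $b = m + c/a$, so each $b \in B$ corresponds to at most one $a \in A$ on the line; hence $n(\ell) \leq \min(|A|,|B|) \leq |Q|^{1/2}$, and the dual form of Corollary \ref{STcor} gives $|L_t| \ll |Q|^2/t^3$. Using $|L_t| \ll \min(|X||Q|/t,\,|Q|^2/t^3)$ and a dyadic decomposition of $\sum_\ell n(\ell)^2$, the resulting geometric sum is dominated by the crossover value $t \approx \sqrt{|Q|/|X|}$, yielding $\sum_\ell n(\ell)^2 \ll |Q|^{3/2}|X|^{1/2} \leq |A|^{3/2}|B|^{3/2}|X|^{1/2}$. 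The main thing to verify is the geometric correspondence in the second paragraph --- in particular, that vertical lines and lines through the origin cause no trouble --- after which the dyadic Szemer\'{e}di--Trotter computation is entirely parallel to that for Lemma \ref{sum+}.
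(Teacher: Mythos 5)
Your proposal is the projective dual of the paper's proof. The paper encodes the data as lines $l_{a,b}\colon y = a(b+x) = ax + ab$ in the $(x,y)$-plane, so that $r_{A(B+x)}(y)$ is the number of lines through the point $(x,y)$, and then bounds the $t$-rich points via Corollary~\ref{STcor}. You instead place the dual points $(a,ab)$ in a plane and bound the $t$-rich lines of slope $-x$ via the dual form of the same corollary; the dyadic interpolation between the pigeonhole bound $|L_t|\le|X||Q|/t$ and the incidence bound $|L_t|\ll|Q|^2/t^3$, and the final $|Q|^{3/2}|X|^{1/2}$ estimate, match the paper's computation exactly. Your extra step of splitting off the diagonal $a_1=a_2$ costs $|A||B||X|$, which the hypothesis $|X|\le|A||B|$ absorbs, so that refinement is harmless.

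The concern you flag about origin lines is, however, not merely something ``to verify''---it is a real obstruction. The line through the origin with slope $m$ contains all $|A|$ points $(a,am)\in Q$ as soon as $m\in B$, and if also $-m\in X$ it belongs to $L$ with $n(\ell)=|A|$, which exceeds $|Q|^{1/2}$ whenever $|A|>|B|$. The refined Szemer\'edi--Trotter bound $|L_t|\ll|Q|^2/t^3$ is then not applicable as stated, and the origin lines alone contribute $|X\cap(-B)|\cdot|A|^2$ to $\sum_\ell n(\ell)^2$, which can dominate $|A|^{3/2}|B|^{3/2}|X|^{1/2}$ (try $|B|=n$, $|A|=n^3$, $X=-B$). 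In fairness, the dual of this difficulty sits in the paper's own proof at the points $(x,0)$ with $-x\in B$, where $r_{A(B+x)}(0)=|A|$, so the ``argue as before'' appeal to $r\le\min(|A|,|B|)$ also fails there. The natural remedy, consistent with every use of the lemma in the paper, is to exclude the $y=0$ contribution from the energy, exactly as the paper does inside the proof of Lemma~\ref{mainlemma} by restricting to $a_1(b_1+c_1)=a_2(b_2+c_2)\neq0$. With that caveat made explicit, your argument is correct and essentially identical to the paper's.

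Your observation that vertical lines are harmless is right: they correspond to $a_1=a_2$, which forces $b_1=b_2$ and is already accounted for in the diagonal term.
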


A similar method is used to establish the following important lemma, which will be applied several times in this paper.
\begin{lemma}\label{mainlemma} For any finite sets $A,B,C\subset{\mathbb{R}}$, we have
\[\E^*_2(A)|A(B+C)|^2\gg{\frac{|A|^4|B||C|}{\log{|A|}}}.\]
\end{lemma}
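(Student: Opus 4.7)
The plan is to combine a single application of the Cauchy--Schwarz inequality with a Szemer\'edi--Trotter estimate for a suitable incidence configuration. Setting $r(s) := r_{A(B+C)}(s)$ and using $\sum_s r(s) = |A||B||C|$, Cauchy--Schwarz yields
\[
|A(B+C)|\cdot \sum_s r(s)^2 \geq (|A||B||C|)^2,
\]
so the lemma follows once we establish the energy bound
\[
\sum_s r(s)^2 \ll |B|^{3/2}|C|^{3/2}\sqrt{\E^*_2(A)\log|A|}.
\]

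To attack this, I would introduce the ratio $t = a_1/a_2$ in the equation $a_1(b_1+c_1)=a_2(b_2+c_2)$ and rewrite
\[
\sum_s r(s)^2 = \sum_{t\in A/A} r_{A/A}(t)\,E(t),\qquad E(t) := \sum_{u\in B+C} r_{B+C}(u)\,r_{B+C}(tu).
\]
A dyadic pigeonhole on $r_{A/A}$ then isolates a scale $\tau$ and a set $T_\tau = \{t : r_{A/A}(t) \approx \tau\}$ with $\tau^2 |T_\tau| \gg \E^*_2(A)/\log|A|$, and this dyadic class dominates the full sum up to a $\log|A|$ loss.

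The main step is to bound $\sum_{t\in T_\tau} E(t)$ by Szemer\'edi--Trotter. For each $(t,c)\in T_\tau\times C$, the equation $b_2 = tb_1 + (t-1)c$ defines a line of slope $t$ and intercept $(t-1)c$ in the $(b_1,b_2)$-plane; these lines are distinct and number $|T_\tau|\cdot|C|$. Their incidences with $B\times B$ encode $E(t)$, weighted by the multiplicities $r_{B+C}$. Applying Corollary~\ref{STcor} (the rich-point consequence of Szemer\'edi--Trotter) to each dyadic shell of $r_{B+C}$ and summing the contributions, one obtains an upper estimate on $\sum_{t\in T_\tau}E(t)$ which, combined with the lower bound $\tau^2|T_\tau|\gg \E^*_2(A)/\log|A|$, yields the desired energy bound and hence the lemma.

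The main obstacle is this Szemer\'edi--Trotter step: the two independent layers of multiplicities (from $r_{A/A}$ and from $r_{B+C}$) force a simultaneous dyadic decomposition, and one must then run through the case analysis of the rich-point inequality to produce a clean estimate that is free of any dependence on the additive energy $\E^+(B,C)$. This bookkeeping is the technical heart of the proof, and it closely follows the pattern of the Szemer\'edi--Trotter arguments underlying the preceding Lemmas~\ref{sum+} and \ref{sum*}.
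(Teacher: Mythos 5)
Your opening moves match the paper's: the Cauchy--Schwarz step reducing the lemma to an upper bound on $\sum_s r_{A(B+C)}^2(s)$, and the rewriting of that energy as $\sum_{t\in A:A} r_{A:A}(t)\,E(t)$ with $E(t)=\sum_u r_{B+C}(u)\,r_{B+C}(tu)$ (this is $r_Q(t)$ in the paper's notation) are exactly how the paper begins. From there, however, the proposal diverges and runs into two genuine problems.

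First, the line family is wrong. Your lines $b_2 = tb_1 + (t-1)c$, one for each pair $(t,c)\in T_\tau\times C$, encode the relation $(b_2+c)/(b_1+c)=t$, i.e.\ only the diagonal case $c_1=c_2=c$. But $E(t)$ counts quadruples $(b_1,b_2,c_1,c_2)$ with $(b_2+c_2)/(b_1+c_1)=t$ and $c_1,c_2$ varying independently, so your incidence count drops almost all of $E(t)$. Fixing this naively---using lines $b_2 = t\,b_1 + (t c_1 - c_2)$ indexed by $(t,c_1,c_2)$ against the point set $B\times B$---gives $|T_\tau||C|^2$ lines and does not yield an estimate of the right shape; the vague remark about incidences being ``weighted by the multiplicities $r_{B+C}$'' does not make this coherent either, since in that parameterisation there are no weights. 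The missing idea is the Elekes--Sharir splitting that the paper uses: from $t=(b_2+c_2)/(b_1+c_1)$ one extracts the intermediate quantity $y := b_2 - tc_1 = tb_1 - c_2$, which gives $E(t)\le\sum_y r_{tB-C}(y)\,r_{B-tC}(y)\le\tfrac12\bigl(\E^+(tB,-C)+\E^+(B,-tC)\bigr)$. Summing this over $t$ in a level set and invoking Lemma~\ref{sum+} is precisely what produces the distributional bound $|\{t:E(t)\ge s\}|\ll |B|^3|C|^3/s^2$; without this step the Szemer\'edi--Trotter input has no purchase on the four-variable count.

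Second, the inequality you invoke for the dyadic scale is in the wrong direction. A dyadic pigeonhole applied to $\sum_t r_{A:A}(t)E(t)$ gives a dominating scale $\tau$, but for that scale one only knows $\tau^2|T_\tau|\le\E_2^*(A)$ (which is all that is needed), not $\tau^2|T_\tau|\gg\E_2^*(A)/\log|A|$; the latter would follow from pigeonholing on $\E_2^*(A)$ itself, which in general selects a different $\tau$. As written the two pigeonholes are conflated. Note also that a pigeonhole loses a full $\log|A|$ rather than the $\log^{1/2}|A|$ your target bound requires; the paper avoids this by applying Cauchy--Schwarz once more to decouple $r_{A:A}$ from $r_Q$, which is sharper here. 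If you repair the Elekes--Sharir step, the Cauchy--Schwarz route is the cleaner way to finish.
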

We remark that Lemma \ref{mainlemma} is optimal, up to logarithmic factors, in the case when $A=B=C=\{1,\cdots,N\}$.

\subsection{Main Results}

The next two theorems represent the main results in this paper. Although they were mentioned in the introduction, they are restated here for the completeness of this section.

\begin{theorem} \label{main1} Let $A\subset{\mathbb{R}}$ be a finite set. Then
$$|A(A+A)|\gtrapprox{|A|^{\frac{3}{2}+\frac{1}{178}}}.$$
\end{theorem}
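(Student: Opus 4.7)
The natural approach is a dichotomy on the multiplicative energy of $A$. Setting $B=C=A$ in Lemma~\ref{mainlemma} yields
\[
\E^*(A)\cdot|A(A+A)|^2 \gg \frac{|A|^6}{\log|A|}.
\]
Writing $|A(A+A)| = K|A|^{3/2}$ for a parameter $K\geq 1$ and rearranging gives $\E^*(A) \gtrapprox |A|^3/K^2$. So either $K$ is already at least $|A|^{1/178}$, in which case the desired conclusion holds directly, or $\E^*(A)$ is close to maximal, satisfying $\E^*(A)\gtrapprox |A|^{3-2/178}$.

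In the large-energy regime the plan is to invoke the multiplicative Balog-Szemer\'edi-Gowers theorem. Under the hypothesis $\E^*(A)\gg|A|^3/L$, BSG produces a subset $A'\subset A$ with $|A'|\gtrapprox |A|/L^{O(1)}$ and $|A':A'|\lessapprox L^{O(1)}|A'|$. Taking $L\approx K^2$, one obtains a large subset $A'\subset A$ whose ratio set is as small as $|A'|$, up to powers of $K$. Since $A'(A'+A')\subset A(A+A)$, it suffices to establish a lower bound on $|A'(A'+A')|$ that beats $|A'|^{3/2}$ in the presence of a small ratio set. Such a bound is available as the sum-product estimate from \cite{LiORN2}; applying it with the information about $|A':A'|$ yields a second lower bound for $|A(A+A)|$, which improves as $K$ decreases. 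Balancing this against Case~1 at the threshold $K = |A|^{1/178}$ produces the stated exponent $3/2+1/178$.

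The principal obstacle will be the bookkeeping in Case~2: both the BSG step and the sum-product estimate from \cite{LiORN2} carry several polynomial losses in $K$, together with additional logarithmic factors, and the particular value $1/178$ reflects a delicate optimisation of these compounded losses against the energy exponent $2$ appearing in Case~1. Absorbing all of this into the relation $\gtrapprox$ requires care, but the logical structure of the argument is precisely the two-case dichotomy above.
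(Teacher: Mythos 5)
Your proposal is correct and follows essentially the same approach as the paper: apply Lemma~\ref{mainlemma} with $B=C=A$ to get a lower bound growing in the energy parameter, and in the high-energy regime invoke the Balog--Szemer\'edi--Gowers theorem to extract a large $A'\subset A$ with small ratio set, then feed that into the sum-product estimate $|A:A|^{10}|A+A|^9\gtrapprox|A|^{24}$ from \cite{LiORN2} via the inclusion $|A(A+A)|\geq|A+A|\geq|A'+A'|$. The only cosmetic differences are that the paper parameterises by $\E^*(A)=|A|^3/K$ rather than $|A(A+A)|=K|A|^{3/2}$ (so the paper balances at $K\approx|A|^{1/89}$), and routes directly through $|A+A|$ rather than $|A'(A'+A')|$; the exponent $1/178$ then drops out of the specific quantitative form of BSG stated as Theorem~\ref{BSG} together with the exponents $10$ and $9$ in Theorem~\ref{convex}.
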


\begin{theorem} \label{main2} Let $A\subset{\mathbb{R}}$ be a finite set. Then
$$|A(A+A+A+A)|\gg{\frac{|A|^2}{\log{|A|}}}.$$
\end{theorem}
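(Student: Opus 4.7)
My plan is to obtain Theorem \ref{main2} as an immediate consequence of Lemma \ref{mainlemma} combined with Solymosi's bound on multiplicative energy in terms of the sumset. The key observation is that the quadruple sum can be written as a sum of two doubles, $A+A+A+A = (A+A) + (A+A)$, which is precisely the shape of the set appearing in Lemma \ref{mainlemma}.

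Concretely, I would apply Lemma \ref{mainlemma} with the choice $B = C = A+A$. This yields
$$\E^*(A) \, |A(A+A+A+A)|^2 \gg \frac{|A|^4 |A+A|^2}{\log|A|}.$$
To close the argument, I would invoke Solymosi's energy bound (established in \cite{solymosi} as the engine of the $4/3$-power sum-product estimate \eqref{soly1}), namely
$$\E^*(A) \ll |A+A|^2 \log|A|.$$
Substituting this upper bound on $\E^*(A)$ into the previous inequality cancels the factor $|A+A|^2$ and produces
$$|A(A+A+A+A)|^2 \gg \frac{|A|^4}{(\log|A|)^2},$$
from which the stated bound $|A(A+A+A+A)| \gg |A|^2/\log|A|$ follows by taking a square root.

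There is essentially no technical obstacle here once Lemma \ref{mainlemma} is in hand: the proof is a two-step combination of that lemma with an off-the-shelf energy estimate. The only delicate point is bookkeeping the logarithmic factors. Lemma \ref{mainlemma} contributes one $\log|A|$ in the denominator and Solymosi's bound contributes a second, so the two combine to give exactly $(\log|A|)^2$ before taking the square root. This matches the claimed bound $|A|^2/\log|A|$, which is known to be sharp up to logarithmic factors in view of Ford's result on product sets of initial intervals and the tightness example $A = \{1,\dots,N\}$ mentioned in the introduction.
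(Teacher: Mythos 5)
Your argument is exactly the paper's proof of Theorem \ref{main2}: apply Lemma \ref{mainlemma} with $B=C=A+A$ and then invoke Solymosi's energy bound $\E^*(A)\ll|A+A|^2\log|A|$ to cancel $|A+A|^2$. The logarithmic bookkeeping is also handled identically, so nothing further is needed.
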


We also prove the following suboptimal result, which is closely related to Theorems \ref{main1} and \ref{main2}:

\begin{theorem} \label{main3} Let $A\subset{\mathbb{R}}$ be a finite set. Then
\begin{equation}
|A(A+A+A)|\gtrapprox{|A|^{\frac{7}{4}+\frac{1}{284}}}.
\label{A(A+A+A)}
\end{equation}
\end{theorem}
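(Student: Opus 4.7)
The plan is to extend the strategy used for Theorem \ref{main1}, applying Lemma \ref{mainlemma} with the sets $B = A$ and $C = A+A$, so that $B+C = A+A+A$. This yields
\[
\E^*(A)\,|A(A+A+A)|^2 \gg \frac{|A|^5\,|A+A|}{\log |A|}.
\]
Writing $\E^*(A) = |A|^{3-\mu}$ and $|A+A| = |A|^{1+\sigma}$, this rearranges to $|A(A+A+A)| \gtrapprox |A|^{3/2 + (\mu+\sigma)/2}$. The task therefore reduces to showing that $\mu + \sigma \geq 1/2 + 1/142$, since this forces the desired exponent $7/4 + 1/284$.

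I would then split into two cases according to the size of the multiplicative energy. In the first case, when $\mu \geq 1/71$, I would appeal to Solymosi's inequality $\E^*(A) \ll |A+A|^2 \log|A|$, which rearranges to $\sigma \geq (1-\mu)/2$ and hence $\mu + \sigma \geq 1/2 + \mu/2 \geq 1/2 + 1/142$, closing this branch immediately.

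In the complementary case $\mu < 1/71$, the multiplicative energy is nearly extremal. Here I would invoke the Balog--Szemer\'{e}di--Gowers Theorem to extract a subset $A' \subseteq A$ with $|A'| \gtrapprox |A|^{1 - O(1/71)}$ and $|A'{:}A'| \lesssim |A|^{1 + O(1/71)}$. Since $A'$ now has small ratio set, I would feed this into the sum-product estimate from \cite{LiORN2}, which provides a polynomial improvement over the Solymosi bound $|A'+A'| \gtrapprox |A'|^{3/2}$. Transferring this back via $A+A \supseteq A'+A'$ gives $\sigma \geq 1/2 + 1/142$, which again closes the argument.

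The main obstacle is the calibration between the two cases. The threshold $\mu_0 = 1/71$ must be chosen so that the loss incurred by Balog--Szemer\'{e}di--Gowers (together with the polynomial gain supplied by \cite{LiORN2}) precisely matches the improvement that Solymosi's inequality produces in the first case; it is this balancing that explains the cumbersome denominator $284$ in the statement. A secondary technical point is verifying that the sum-product estimate from \cite{LiORN2} is strong enough to survive the $|A'|$ versus $|A|$ loss coming from the Balog--Szemer\'{e}di--Gowers step.
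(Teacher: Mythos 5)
Your proposal matches the paper's proof step for step: you apply Lemma \ref{mainlemma} with $B=A$, $C=A+A$ to get $\E^*(A)|A(A+A+A)|^2 \gtrapprox |A|^5|A+A|$, split on the size of $\E^*(A)$ at the same threshold $K \approx |A|^{1/71}$, handle the large-$K$ branch with Solymosi's energy bound \eqref{soly3}, and handle the small-$K$ branch with Theorem \ref{BSG} followed by the convexity sum-product estimate of \cite{LiORN2}, with the two branches balancing at exponent $7/4 + 1/284$.

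One small bookkeeping imprecision: in the case $\mu < 1/71$ the Balog--Szemer\'edi--Gowers plus \cite{LiORN2} argument does not give the $\mu$-independent bound $\sigma \geq 1/2 + 1/142$. It gives $\sigma \geq 5/9 - 40\mu/9$ (in your notation), which at the boundary $\mu = 1/71$ is $35/71 < 36/71 = 1/2 + 1/142$. What saves the argument is that you are free to add the $\mu$-term from the first factor: $\mu + \sigma \geq 5/9 - 31\mu/9 \geq 324/639 = 1/2 + 1/142$ for $\mu \leq 1/71$. So the correct way to close case 2 is to bound $\mu + \sigma$ directly rather than $\sigma$ alone; this is exactly what the paper does via inequality \eqref{second}. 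Since you explicitly flagged the need to calibrate the two branches, the overall plan is sound and the fix is a one-line recomputation.
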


\subsection{Products of Additive Shifts}

We will prove a family of results bounding from below the product set of translates of a set $A$. One may observe a familiar gradient in this sequence of results: the bounds improve as we introduce more variables and more translates. It was proven in \cite{TimORN} that, for any finite set $A\subset{\mathbb{R}}$ and any value $x\in{\mathbb{R}\setminus{\{0\}}}$,
\begin{equation}
|A(A+x)|\gg{\frac{|A|^{24/19}}{(\log{|A|})^{2/19}}}.
\label{old}
\end{equation}

As mentioned in the introduction, we will prove the following Theorem, which shows that we can usually improve on \eqref{old} in the case when $x\in{A}$.

\begin{theorem} \label{translates1}
Let $A\subset{\mathbb{R}}$ be a finite set. Then there exists a subset $A'\subset{A}$, such that $|A'|\geq{\frac{|A|}{2}}$, and for all $a\in{A'}$,
$$|A(A+a)|\gg{|A|^{3/2}}.$$
\end{theorem}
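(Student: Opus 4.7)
The plan is to combine Lemma \ref{sum*} (applied with $X=B=A$) with the Cauchy--Schwarz lower bound \eqref{CS} and a standard averaging (Markov) argument.

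First, I apply Lemma \ref{sum*} with $X = B = A$. Since $|X| = |A| \leq |A|^2 = |A||B|$, the hypothesis is satisfied, and we obtain
\[
\sum_{a \in A} \E^*(A, a+A) \ll |A|^{3/2} \cdot |A|^{3/2} \cdot |A|^{1/2} = |A|^{7/2}.
\]

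Next, by Markov's inequality, the set
\[
A' := \left\{ a \in A \;:\; \E^*(A, a+A) \leq C |A|^{5/2} \right\}
\]
satisfies $|A'| \geq |A|/2$ provided the constant $C$ is chosen large enough relative to the implicit constant above.

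Finally, for any $a \in A'$, I invoke the Cauchy--Schwarz lower bound \eqref{CS} applied to the pair $(A, a+A)$, noting $|a+A| = |A|$:
\[
\E^*(A, a+A) \geq \frac{|A|^{2} \cdot |a+A|^{2}}{|A(A+a)|} = \frac{|A|^{4}}{|A(A+a)|}.
\]
Rearranging and using the defining bound on $A'$ gives $|A(A+a)| \geq |A|^{4}/\E^*(A,a+A) \gg |A|^{4}/|A|^{5/2} = |A|^{3/2}$, as required.

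There is no genuine obstacle here beyond checking that Lemma \ref{sum*} applies; the argument is a direct averaging over $a$, made possible by the strength of Lemma \ref{sum*}, which gains a factor of $|X|^{1/2}$ rather than $|X|$ on the right-hand side. The only mildly subtle point is noticing that one should bound the quantity $\E^*(A, a+A)$ in average (where the Szemer\'edi--Trotter input does all the work) rather than trying to control $|A(A+a)|$ directly for a single shift.
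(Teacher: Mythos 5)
Your proof is correct and follows essentially the same path as the paper: apply Lemma \ref{sum*} with $X=B=A$, pick out the majority of shifts $a$ for which $\E^*(A,a+A)\ll|A|^{5/2}$ by Markov's inequality, and then invoke the Cauchy--Schwarz lower bound \eqref{CS} to convert that energy bound into $|A(A+a)|\gg|A|^{3/2}$. No substantive differences from the paper's argument.
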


Adding more variables to our set leads to better lower bounds:

\begin{theorem} \label{translates2}
Let $A\subset\mathbb{R}$ be a finite set. Then there exists a subset $A'\subset{A}$ with cardinality $|A'|\geq{\frac{|A|}{2}}$, such that for all $a\in{A'}$,
$$|(A+A)(A+a)|\gg{\frac{|A|^{5/3}}{(\log{|A|})^{1/3}}}.$$
\end{theorem}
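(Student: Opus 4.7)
The plan is to mirror the proof of Theorem~\ref{translates1}, but with one copy of $A$ replaced by $A+A$. First I would apply Lemma~\ref{sum*} with the roles of $A,B,X$ played by $A+A,\ A,\ A$, respectively. The hypothesis $|X|\leq|A||B|$ reduces to $|A|\leq|A|\cdot|A+A|$, which is trivial, so the lemma produces the averaged estimate
\[
\sum_{a\in A}\E^*(A+A,A+a)\ll|A+A|^{3/2}|A|^{2}.
\]
By Markov's inequality, the subset $A'\subseteq A$ of those $a\in A$ satisfying $\E^*(A+A,A+a)\ll|A+A|^{3/2}|A|$ has cardinality at least $|A|/2$, and then the Cauchy--Schwarz bound $|XY|\geq|X|^{2}|Y|^{2}/\E^*(X,Y)$ from \eqref{CS} yields, for every such $a$,
\[
|(A+A)(A+a)|\gg|A+A|^{1/2}|A|. \qquad(\star)
\]
Already $(\star)$ alone suffices to give the target $|A|^{5/3}/(\log|A|)^{1/3}$ in the regime $|A+A|\gtrsim|A|^{4/3}/(\log|A|)^{2/3}$, i.e.\ whenever $A$ has sufficiently poor additive structure.

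To handle the complementary regime (small $|A+A|$, i.e.\ $A$ additively structured), I would bring in Solymosi's sum-product theorem applied to the set $A+a$. Since $(A+a)+(A+a)=A+A+2a$ is a translate of $A+A$ with the same cardinality, Solymosi gives
\[
|(A+a)(A+a)|\gg\frac{|A|^{4}}{|A+A|^{2}\log|A|}.
\]
For every $a\in A$ the containment $(A+a)(A+a)\subseteq(A+A)(A+a)$ holds: a typical element $(x+a)(y+a)$ can be written as $(u+v)(w+a)$ by taking $u=x$, $v=a$, $w=y$, all of which lie in $A$. This yields the complementary bound
\[
|(A+A)(A+a)|\geq|(A+a)^{2}|\gg\frac{|A|^{4}}{|A+A|^{2}\log|A|}. \qquad(\star\star)
\]

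The final step is to combine $(\star)$ and $(\star\star)$, treating $|A+A|$ as a free parameter, and to take the worst case. This is the main obstacle. The naive maximum of the two bounds, balanced at $|A+A|\approx|A|^{6/5}/(\log|A|)^{2/5}$, produces only $|A|^{8/5}/(\log|A|)^{1/5}$, which falls short of $|A|^{5/3}/(\log|A|)^{1/3}$ by roughly a factor of $|A|^{1/15}$. To close this gap I would look either for a sharpening of $(\star)$ in the additively structured regime (e.g.\ using that $A+A$ already carries many additive coincidences, which should improve the averaged energy bound beyond what Lemma~\ref{sum*} gives in general), or for an auxiliary third bound---Theorem~\ref{translates1} provides $|(A+A)(A+a)|\geq|A(A+a)|\gg|A|^{3/2}$ for typical $a$, and Lemma~\ref{mainlemma} applied with $X=A+a$, $B=C=A$ gives $|(A+A)(A+a)|^{2}\gg|A|^{6}/(\E^*(A+a)\log|A|)$, either of which could be used to cover the delicate intermediate range $|A+A|\in[|A|^{7/6}/(\log|A|)^{1/3},\,|A|^{4/3}/(\log|A|)^{2/3}]$. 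Pinning down the exponents $5/3$ and $1/3$ in this final combination is the crux of the argument.
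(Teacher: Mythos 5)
Your first half matches the paper's argument exactly: apply Lemma~\ref{sum*} with roles $A\mapsto A+A$, $B\mapsto A$, $X\mapsto A$, pass to a half-density set $A'$ by Markov, and use \eqref{CS} to obtain $(\star)$, i.e.\ $|(A+A)(A+a)|\gg |A|\,|A+A|^{1/2}$. This is the paper's inequality \eqref{nearly}. The divergence, and the genuine gap, is in what you pair $(\star)$ with.

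Your $(\star\star)$ is correct as stated but is strictly weaker than what the paper uses, and you have correctly computed that $(\star)$ together with $(\star\star)$ balances to only $|A|^{8/5}$. The reason $(\star\star)$ is too weak is structural: it descends to the subset $(A+a)(A+a)\subseteq (A+A)(A+a)$ and then uses only Cauchy--Schwarz together with Solymosi's bound $\E^*(A+a)\ll|A+A|^2\log|A|$. This throws away the incidence-theoretic leverage of Lemma~\ref{mainlemma}. Among the candidate fixes you list at the end, one is exactly right and you should run with it: apply Lemma~\ref{mainlemma} with the role of the first set played by $A+a$ and with $B=C=A$, giving
\[
\E^*(A+a)\,|(A+a)(A+A)|^2 \gg \frac{|A|^6}{\log|A|},
\]
and then, crucially, combine this with Solymosi's bound on $\E^*(A+a)$ (which you already invoked in $(\star\star)$, just in the wrong pairing). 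Together these give
\[
|(A+a)(A+A)|\,|A+A| \gg \frac{|A|^3}{\log|A|},
\]
which is the paper's Lemma~\ref{handylemma} and is stronger than $(\star\star)$ by a factor of $|A+A|/|A|$. Substituting this lower bound on $|A+A|^{1/2}$ directly into $(\star)$ gives $|(A+a)(A+A)|^{3/2}\gg |A|^{5/2}/(\log|A|)^{1/2}$, which is the claimed $|A|^{5/3}/(\log|A|)^{1/3}$. So you had all the ingredients on the table, but stopped short of the assembly: you never combined the Lemma~\ref{mainlemma}-on-$A+a$ bound with Solymosi to produce the uniform estimate, and you never performed the final substitution into $(\star)$. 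As written the proposal does not establish the theorem.
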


Theorem \ref{translates2} is similar to the result of Theorem \ref{main1}, especially if we think of the set $A(A+A)$ in the terms $(A+0)(A+A)$. This result tells us that we can usually do better than Theorem \ref{main1} if $0$ is replaced by an element of $A$.

The next theorem is quantitatively worse than Theorem \ref{translates2}, but is more general, since it applies not only for most $a\in{A}$, but to all real numbers except for a single problematic value.
\begin{theorem} \label{translates3}
Let $A\subset\mathbb{R}$ be a finite set. Then, for all but at most one value $x\in{\mathbb{R}}$,
\begin{equation}
|(A+A)(A+x)|\gg{\frac{|A|^{11/7}}{(\log{|A|})^{3/7}}}.
\label{annoying}
\end{equation}
\end{theorem}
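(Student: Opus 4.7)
The plan is to apply the Szemerédi--Trotter theorem to a natural pencil of lines that parametrises $(A+A)(A+x)$ as a slice in $x$, and then to handle a complementary small-doubling regime via Lemma~\ref{mainlemma}. After a harmless translation ensuring $0 \notin A+A$, I would introduce the line family
\[
L = \{\ell_{s,c} : y = s(t+c) \mid (s,c) \in (A+A)\times A\}, \qquad |L| = |A+A|\cdot|A|.
\]
For every $x \in \mathbb{R}$, each $\ell_{s,c} \in L$ meets the vertical slice $\{t = x\}$ at the single point $(x, s(c+x))$, whose ordinate lies in $(A+A)(A+x)$; so the number of distinct such points on this slice is exactly $N(x) := |(A+A)(A+x)|$, and the total incidence count between $L$ and the slice is $|L|$.

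Suppose, toward a contradiction, that two distinct $x_1 \neq x_2$ both satisfy $N(x_i) \leq M$, where $M := c_0 |A|^{11/7}/(\log|A|)^{3/7}$ for a small enough absolute constant $c_0$. Setting $P := (\{x_1\} \times (A+A)(A+x_1)) \cup (\{x_2\} \times (A+A)(A+x_2))$, we have $|P| \leq 2M$ and $I(P,L) = 2|L|$. Theorem~\ref{ST1} yields
\[
2|L| \ll (|P||L|)^{2/3} + |P| + |L|.
\]
Since $|L| \geq |A|^2 \gg M \geq |P|/2$, the middle term is dominated by the main one; the $|L|$-term on the right can be absorbed into the $2|L|$ on the left because the leading ST constant in the $+|L|$ term is $1$. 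One obtains $|L| \ll M^2$, i.e.\ $|A+A| \ll |A|^{15/7}/(\log|A|)^{6/7}$. Thus whenever $|A+A| \gtrsim |A|^{15/7}/(\log|A|)^{6/7}$, at most one $x$ can be exceptional, and the theorem follows in this regime.

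In the complementary small-doubling regime $|A+A| \lesssim |A|^{15/7}/(\log|A|)^{6/7}$, I would apply Lemma~\ref{mainlemma} with multiplicative set $A+A$ and additive sets $B = \{x\}$, $C = A$, yielding
\[
\E^*(A+A) \cdot N(x)^2 \gg \frac{|A+A|^4 \cdot |A|}{\log|A|}.
\]
Small additive doubling forces $A$ (and hence $A+A$) to be highly additively structured (close to an arithmetic progression by Freiman--Ruzsa), and such structured sets satisfy $\E^*(A+A) \ll |A+A|^3/\log|A|$, sharper than the trivial $|A+A|^3$ by a log factor. Plugging this in gives $N(x)^2 \gtrsim |A+A|\cdot|A|/\log|A| \gtrsim |A|^{22/7}/(\log|A|)^{6/7}$ \emph{uniformly} in $x$ in this regime, so there is no exceptional $x$ at all here, and combining the two regimes yields "at most one $x$" overall.

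The main obstacle is justifying the refined bound $\E^*(A+A) \ll |A+A|^3/\log|A|$ in the small-doubling case: the trivial bound falls short by exactly one log factor, so one must exploit the full additive structure of $A$, e.g.\ via Freiman's theorem combined with a multiplicative-energy computation for generalised arithmetic progressions, or via a sum-product estimate from \cite{LiORN2}. A secondary subtlety is the careful tracking of constants in Szemerédi--Trotter: since the "$+|L|$" term of ST has coefficient one, the inequality $2|L|$ vs.\ $|L|$ leaves genuine room to absorb, which is precisely what converts "at most $O(1)$" into the literal "at most one" claimed in the theorem.
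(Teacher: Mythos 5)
Your proposal does not work, and the failure is structural rather than a fixable gap. Let me flag the two main issues.

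\textbf{The ``large doubling'' regime is vacuous.} Your Szemer\'edi--Trotter step concludes that $|L|\ll|P|^2\ll M^2$, where $M\approx|A|^{11/7}/(\log|A|)^{3/7}$ and $|L|=|A+A||A|$. But $|L|\leq|A|^3$ while $M^2\approx|A|^{22/7}/(\log|A|)^{6/7}$, and since $22/7>3$, the inequality $|L|\ll M^2$ holds for every finite $A$ once $|A|$ is large. In other words the conclusion of the incidence argument is a tautology and delivers no contradiction; equivalently, the threshold $|A+A|\gtrsim|A|^{15/7}/(\log|A|)^{6/7}$ exceeds the trivial bound $|A+A|\leq|A|^2$, so no set $A$ falls into your first regime. (The careful bookkeeping about the coefficient of the $+|L|$ term in Theorem~\ref{ST1} is therefore moot.)

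\textbf{The ``small doubling'' regime does not reach the exponent.} From Lemma~\ref{mainlemma} with the triple $(A+A,\{x\},A)$ one gets $\E^*(A+A)\,N(x)^2\gg|A+A|^4|A|/\log|A|$. Even granting the unproven (and in fact false in general) bound $\E^*(A+A)\ll|A+A|^3/\log|A|$, this yields only $N(x)^2\gg|A+A||A|\leq|A|^3$, i.e.\ $N(x)\gg|A|^{3/2}$ at best — strictly weaker than the claimed $|A|^{11/7}$. Your displayed chain $N(x)^2\gtrsim|A+A||A|/\log|A|\gtrsim|A|^{22/7}/(\log|A|)^{6/7}$ implicitly uses $|A+A|\gtrsim|A|^{15/7}$, which again cannot hold. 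There is also a conceptual symptom worth noting: an argument that bounds $N(x)$ uniformly in $x$ cannot account for the ``all but one'' clause in the statement, whereas that clause is the essential content of the theorem — the $x=0$ case must genuinely be allowed to fail (it corresponds to $|A(A+A)|$, for which only the weaker Theorem~\ref{main1} is known).

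The paper's proof has a quite different mechanism that both produces the exponent $11/7$ and explains the single exception: for two distinct real numbers $a\neq b$, it applies Lemma~\ref{etranslates} to $A+a$ with shift $\alpha=b-a\neq0$ and $C=A+A$, giving an upper bound for $\E^*(A+a)$ in terms of $|(A+b)(A+A)|$; this is played against the lower bound for $\E^*(A+a)$ from Lemma~\ref{mainlemma} in terms of $|(A+a)(A+A)|$, and the product of the two quantities is then bounded below. The nonvanishing of $\alpha=b-a$ is precisely what requires $a\neq b$, so one exceptional $x$ must be permitted. Your attempt has no analogue of this two-translate comparison, and without it the exponent $11/7$ is out of reach.
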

Unfortunately, this does not lead to an improvement to Theorem \ref{main1}, since the single bad $x$ that violates \eqref{annoying} may be equal to zero.

\subsection{Further results}

Finally, we formulate a theorem of a slightly different nature. 
\begin{theorem} 
    Let $A,B\subseteq \R$ be finite sets.

Then
\begin{equation}\label{f:main_intr_2_new}
    |A+B|^3 \gg \frac{|B| \E^{*} (A)}{\log |A|} \ge \frac{|A|^4 |B|}{|AA^{\pm1}| \log |A|} \,,
\end{equation}

and
\begin{equation}\label{f:main_intr_2'-_new}
    |B+AA|^3 \gg \frac{|B|^{} |A|^{12}}{(\E^*_3 (A))^2 |AA^{-1}| \log |A|} \,.
\end{equation}
\label{t:main_intr_II}
\end{theorem}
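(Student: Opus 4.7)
The second inequality in~\eqref{f:main_intr_2_new} is immediate from the Cauchy--Schwarz bound~\eqref{CS} applied first with $B=A$ (giving $\E^{*}(A) \ge |A|^4/|AA|$) and then symmetrically with the ratio set $A/A$, so the substantive content is the estimate $|A+B|^3 \log |A| \gg |B|\,\E^{*}(A)$. My plan is to model the proof on Solymosi's classical estimate $\E^{*}(A) \ll |A+A|^2 \log|A|$, enhancing the argument so that the second set $B$ appears on the right-hand side.

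The first step is a standard dyadic pigeonhole. Writing $\E^{*}(A) = \sum_{\lambda} r_{A/A}(\lambda)^2$ and partitioning the support into $O(\log|A|)$ dyadic level sets of $r_{A/A}$, one finds $\tau \ge 1$ and $\Lambda \subseteq A/A$ with $r_{A/A}(\lambda) \asymp \tau$ on $\Lambda$ and $|\Lambda|\,\tau^{2} \gg \E^{*}(A)/\log |A|$. Equivalently, each set $A_\lambda := A \cap \lambda A$ has cardinality $\asymp \tau$ for $\lambda \in \Lambda$.

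The second step is a Szemer\'edi--Trotter incidence count on a point--line configuration tailored to $A+B$. Take $P := (A+B) \times (A+B)$, which has at most $|A+B|^2$ points. For each $\lambda \in \Lambda$ and each pair $(b_1,b_2) \in B \times B$ introduce the line $\ell_{\lambda,b_1,b_2}\colon y = \lambda x + (b_2 - \lambda b_1)$; it contains the $\tau$ collinear points $\{(a+b_1,\lambda a + b_2) : a \in A_\lambda\} \subseteq P$. The lines come in pencils: for fixed $(b_1,b_2)$ they all pass through $(b_1,b_2)$. A naive application of the rich-line form of Corollary~\ref{STcor} to these $\tau$-rich lines gives only $|B|\,\E^{*}(A)/\log|A| \ll |A+B|^4$, losing one power of $|A+B|$. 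To gain the missing power I would superimpose Solymosi's wedge argument on each pencil: adjacent-slope pairs of $\tau$-rich lines through a common vertex $(b_1,b_2)$ have midpoints forced into pairwise disjoint planar wedges inside $2(A+B) \times 2(A+B)$, and combining this geometric rigidity across pencils (with a careful count of coincidences, which is where the $\log|A|$ loss enters) turns the naive $|A+B|^4/\tau^3$ bound into a sharper $|A+B|^3/\tau^2$ bound. The main obstacle is precisely carrying out this combination cleanly, since Solymosi's midpoint uniqueness requires a common pencil origin and so book-keeping contributions from pairs of lines belonging to different pencils is what I expect to be most delicate.

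Finally, the companion inequality~\eqref{f:main_intr_2'-_new} is obtained by applying the first inequality with $A$ replaced by the product set $AA$, which yields $|B+AA|^3 \gg |B|\,\E^{*}(AA)/\log |A|$ (using $\log|AA| \le 2\log|A|$), together with the auxiliary lower bound
\[
\E^{*}(AA) \gg \frac{|A|^{12}}{(E^{*}_{3}(A))^{2}\,|AA^{-1}|}.
\]
This auxiliary bound is a standard Cauchy--Schwarz/H\"older manipulation of representation functions: the identity $r_{AA/AA}(\mu) = \sum_{t} r_{AA}(t)\,r_{AA}(t/\mu)$ together with $\E^{*}(AA) = \sum_\mu r_{AA/AA}(\mu)^2 \ge |A|^8/|AA/AA|$ and a Pl\"unnecke--Ruzsa estimate $|AA/AA| \ll (E^{*}_{3}(A))^{2}\,|AA^{-1}|/|A|^{4}$ (obtained from Hölder's inequality relating $\E^{*}(A)$ and $E^{*}_{3}(A)$) assembles the desired inequality, which combined with the first part produces~\eqref{f:main_intr_2'-_new}.
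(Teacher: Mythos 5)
Your treatment of the easy second inequality in \eqref{f:main_intr_2_new} is fine, and your high-level plan for \eqref{f:main_intr_2'-_new} (apply the first inequality with $A$ replaced by $AA$, then bound $\E^*(AA)$ from below in terms of $\E^*_3(A)$ and $|AA^{-1}|$) matches the paper's structure. But the core of your proof has two genuine gaps.

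For the main inequality $|A+B|^3\log|A|\gg |B|\,\E^*(A)$, you set out to reprove and ``enhance'' Solymosi's wedge argument over $(A+B)\times(A+B)$, and you yourself identify that you cannot control interactions between lines belonging to different pencils: Solymosi's midpoint-uniqueness mechanism is designed for lines sharing the single pencil vertex $(0,0)$ in $(A\times A)\times(A\times A)$, and there is no clean way to propagate it across $|B|^2$ distinct vertices. This is not a bookkeeping nuisance but the crux --- as you note, without it the rich-point bound only gives $|A+B|^4$, and you have no argument to recover the missing power. The paper avoids this entirely: it never revisits Solymosi. Instead it applies Corollary~\ref{mainlemmacor} (the upper bound $\sum_{x\neq 0}r_{A(B+C)}^2(x)\ll \E^*_2(A)^{1/2}|B|^{3/2}|C|^{3/2}(\log|A|)^{1/2}$, coming from Lemma~\ref{mainlemma}) with the choice $C=A-B$. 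The key observation is that $r_{B+(A-B)}(a)\geq |B|$ for every $a\in A$, hence $r_{A(B+C)}(x)\geq r_{AA}(x)|B|$, which gives the lower bound $\sum_{x\neq0}r_{A(B+C)}^2(x)\geq |B|^2\E^*_2(A)$. Comparing upper and lower bounds and cancelling a factor of $\E^*_2(A)^{1/2}$ yields $|B|\,\E^*_2(A)\ll |A-B|^3\log|A|$, which is the statement after replacing $B$ by $-B$. This is a one-paragraph argument that sidesteps the incidence geometry you were trying to redo.

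For the auxiliary lower bound on $\E^*(AA)$, your claimed ``Pl\"unnecke--Ruzsa estimate'' $|AA/AA|\ll (\E^*_3(A))^2|AA^{-1}|/|A|^4$ is not a standard fact and you give no argument for it; as written it is a conjecture, not a step. The paper instead derives $\E^*(AA^{\pm 1})\geq |A|^{12}/\big((\E^*_3(A))^2|A:A|\big)$ (stated as \eqref{f:E(A+A)_2} of Corollary~\ref{c:E(A+A)} in additive notation) from Lemma~\ref{corpop} of Schoen--Shkredov, which controls $\sum_{s\in P_*}|A\pm A_s|$ in terms of $\E_3(A)$, combined with the Katz--Koester inclusion and a single Cauchy--Schwarz. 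You would need to either reprove something along those lines or find an actual reference for your claimed Pl\"unnecke-type bound; neither is supplied.
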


Let us say a little about the meaning of these two bounds. If we fix $A=B$, then \eqref{f:main_intr_2_new} tells us that $|AA|$ is very large if $|A+A|$ is very small. Similar results are already known; for example, a quantitatively improved version of this statement is a consequence of Solymosi's sum-product estimate in \cite{solymosi}. The benefit of \eqref{f:main_intr_2_new} is that it also works for a mixed sum set $A+B$.

One of the main objectives of this paper is to study the set $A(A+A)$, and inequality \eqref{f:main_intr_2'-_new} considers the dual problem of the set $A+AA$. As stated earlier, it is easy to show that $|A+AA|\gg{|A|^{3/2}}$. If we fix $A=B$ in \eqref{f:main_intr_2'-_new}, then this bound gives an improvement in the case when $\E_3^*(A)$ is small. We hope to carry out a more detailed study of the set $A+AA$ in a forthcoming paper.

\section{Proofs of Preliminary Results}

\subsection*{Proof of Lemma \ref{sum+}}

Recall that Lemma \ref{sum+} states that for $|X|\leq|A||B|$,
\[
\sum_{x\in X}\E^+(A,xB)\ll |A|^{3/2}|B|^{3/2}|X|^{1/2}.
\]
 
Note that
\begin{equation}
\sum_{x\in{X}}\E^+(A,xB)=\sum_{x\in{X}}\sum_yr_{A+xB}^2(y).
\label{firstsum}
\end{equation}

We will interpret $r_{A+xB}(y)$ geometrically and use corollary \ref{STcor} to show that there are not too many pairs $(x,y)$ for which the quantity $r_{A+xB}(y)$ is large.

\begin{claim*}
Let $R_t=\{(x,y):r_{A+xB}(y)\geq{t}\}$.
Then for any integer $t\geq 2$,
\begin{equation}
\label{A+xA}
  |R_t|\ll{\frac{|A|^2|B|^2}{t^3}}.
\end{equation}
\end{claim*}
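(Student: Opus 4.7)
The approach is purely geometric: reinterpret $r_{A+xB}(y)$ as an incidence count and apply Corollary \ref{STcor}. For each pair $(a,b) \in A \times B$, associate the line $\ell_{a,b} \subset \mathbb{R}^2$ defined by $y = bx + a$, and set $L = \{\ell_{a,b} : (a,b) \in A \times B\}$. Since the slope $b$ and intercept $a$ of a non-vertical line determine it uniquely, distinct pairs give distinct lines, so $|L| = |A||B|$. The line $\ell_{a,b}$ passes through $(x,y)$ exactly when $a + xb = y$, so $r_{A+xB}(y)$ equals the number of lines of $L$ through $(x,y)$. Consequently, $R_t$ is precisely the set $P_t$ of $t$-rich points of $L$.

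To invoke the sharper form of Corollary \ref{STcor}, I need to verify that no point of $R_t$ lies on more than $|L|^{1/2} = (|A||B|)^{1/2}$ lines of $L$. If $(x_0, y_0) \in R_t$ with $x_0 \neq 0$, then for each $b \in B$ the equation $a + x_0 b = y_0$ has at most one solution $a = y_0 - x_0 b$, so at most $|B|$ lines of $L$ pass through $(x_0, y_0)$; by the same argument with $a$ and $b$ swapped, at most $|A|$ lines do. Therefore at most $\min(|A|,|B|) \leq (|A||B|)^{1/2}$ lines meet at any such point, verifying the hypothesis. The only delicate case is $x_0 = 0$: the point $(0,y_0)$ lies on $|B|$ lines of $L$ whenever $y_0 \in A$, so there are at most $|A|$ such potentially-overfull points. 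In the application to Lemma \ref{sum+} we may assume $0 \notin X$, since the contribution of $x=0$ to $\sum_{x\in X}\E^+(A,xB)$ is only $\E^+(A,\{0\}) = |A|$, which is easily absorbed into the final bound of the lemma.

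With the richness hypothesis verified off the $y$-axis, Corollary \ref{STcor} immediately yields $|R_t| \ll |L|^2/t^3 = |A|^2|B|^2/t^3$, proving the claim. The main (minor) obstacle in the argument is precisely the bookkeeping around the degenerate axis point; the geometric translation itself is in the spirit of Elekes's classical reduction for sum-product estimates and introduces no further difficulty.
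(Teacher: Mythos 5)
Your proof is essentially the paper's proof: define a family of $|A||B|$ lines, observe that $r_{A+xB}(y)$ is the number of lines through $(x,y)$, verify the richness cap, and invoke Corollary~\ref{STcor}. (You write the lines as $y = bx + a$, which actually matches the notation $r_{A+xB}$ more faithfully than the paper's $y = ax + b$; this is a cosmetic relabelling and both give $|L| = |A||B|$.)

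The one place you go beyond the paper is the $x_0 = 0$ case. The paper simply asserts $r_{A+xB}(y) \le \min\{|A|,|B|\}$ and moves on, but for $x = 0$ the point $(0,a)$ with $a \in A$ lies on $|B|$ lines of $L$, which exceeds $\min\{|A|,|B|\}$ whenever $|B| > |A|$. So the claim as literally stated can fail for points on the $y$-axis, and your observation that the hypothesis of the sharper Szemer\'edi--Trotter corollary is only verified off the axis is correct and worth flagging. However, your quantitative remark about absorbing this into Lemma~\ref{sum+} has a small slip: with the multiplicity convention implicit in $r_{A+xB}(y) = |\{(a,b): a + xb = y\}|$ (which is what the line count computes), the $x=0$ term is $\E^+(A, 0\cdot B) = \sum_y r_{A+0\cdot B}^2(y) = |A||B|^2$, not $\E^+(A,\{0\}) = |A|$. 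That larger value is still absorbable under the standing hypothesis $|X| \le |A||B|$ only when $|B| \le |A||X|$, so the cleanest fix is exactly what you suggest at the level of the Claim itself: restrict to $x \neq 0$ (or, equivalently, assume $0 \notin X$ in Lemma~\ref{sum+}), which costs nothing in any of the downstream applications.
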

\begin{proof}[Proof of Claim]
Define a collection of lines
$$L:=\{l_{a,b}:(a,b)\in{A\times{B}}\},$$
where $l_{a,b}$ is the line with equation $y=ax+b$.
Clearly, $|L|=|A||B|$.

Since $r_{A+xB}(y)$ counts the number of solutions $(x,y)$ to $y=ax+b$, we see that $r_{A+xB}(y)$ is the number of lines of $L$ that are incident to $(x,y)$.
Thus every pair $(x,y)$ in $R_t$ is a $t$-rich point of $L$.
Further, because
\[r_{A+xB}(y)\leq{\min{\{|A|,|B|\}}}\leq{(|A||B|)^{1/2}}\]
there are no pairs $(x,y)$ such that $r_{A+xB}(y) > (|A||B|)^{1/2}$; that is, there are no points incident to more than $|L|^{1/2}$ lines of $L$.
It follows from Corollary \ref{STcor} that
\[
|R_t|\leq |P_t|\ll\frac{|L|^2}{t^3}=\frac{|A|^2|B|^2}{t^3},
\]
which proves the claim.
\end{proof}

Now we will interpolate between \eqref{A+xA} and a trivial bound.
Let $\triangle\geq{1}$ be an integer to be specified later. The sum in \eqref{firstsum} can be divided up as follows:
\begin{align}
\sum_{x\in{X}}\E^+(A,xB)&=\sum_{x\in{X}}\sum_yr_{A+xB}^2(y)
\\&\leq{\sum_{x\in{X}}\sum_{y\,:\,r_{A+xB}(y) \leq \triangle}r_{A+xB}^2(y)+\sum_{(x,y)\,:\,r_{A+xB}(y)>{\triangle}}r_{A+xB}^2(y)}. \label{eq1}
\end{align}

To bound the first term in \eqref{eq1}, observe that
\begin{align}
\sum_{x\in{X}}\sum_{y\,:\,r_{A+xB}(y)\leq \triangle}r_{A+xB}^2(y)&\leq{\triangle\sum_{x\in{X}}\sum_{y}r_{A+xB}(y)}
\\&=\triangle|A||B|\sum_{x\in{X}}1
\\&=|A||B|\triangle|X|. \label{firstterm}
\end{align}

To bound the second term in \eqref{eq1}, we decompose dyadically and then apply \eqref{A+xA} to bound the size of the dyadic sets we are summing over:

\begin{align}
\sum_{(x,y)\,:\,r_{A+xB}(y)>{\triangle}}r_{A+xB}^2(y)&=\sum_{j\geq{1}}\, \sum_{(x,y)\,:\,\triangle2^{j-1}<{r_{A+xB}(y)}\leq\triangle2^j}r_{A+xB}^2(y)
\\&\ll{\sum_{j\geq{1}}\frac{|A|^2|B|^2}{(\triangle2^j)^3}(\triangle2^j)^2}
\\&=\frac{|A|^2|B|^2}{\triangle}\sum_{j\geq{1}}\frac{1}{2^j}
\\&=\frac{|A|^2|B|^2}{\triangle}. \label{secondterm}
\end{align}
For an optimal choice, set the parameter $\triangle=\left\lceil\frac{|A|^{1/2}|B|^{1/2}}{|X|^{1/2}}\right\rceil\approx{\frac{|A|^{1/2}|B|^{1/2}}{|X|^{1/2}}}>1$.
The approximate equality here is a consequence of the assumption $\frac{|A|^{1/2}|B|^{1/2}}{|X|^{1/2}}>1$.
 
Combining the bounds from \eqref{firstterm} and \eqref{secondterm} with \eqref{eq1}, it follows that
$$\sum_{x\in{X}}\E^+(A,xB)\ll{|A|^{3/2}|B|^{3/2}|X|^{1/2}},$$
as required.

This completes the proof of Lemma \ref{sum+}.
\begin{flushright}
\qedsymbol
\end{flushright}

The proof of Lemma \ref{sum*} is essentially the same, with the roles of addition and multiplication reversed. For completeness, a full proof is provided.

\subsection*{Proof of Lemma \ref{sum*}} 

Recall that Lemma \ref{sum*} states that for $|X|\leq |A||B|$, 
\[
\sum_{x\in X}\E^*(A,B+x)\ll |A|^{3/2}|B|^{3/2}|X|^{1/2}.
\]

Define a set of lines $L:=\{l_{a,b}:(a,b)\in{A\times{B}}\}$, where $l_{a,b}$ now represents the line with equation $y=a(b+x)$.
These lines are all distinct and so $|L|=|A||B|$.
Since $r_{A(B+x)}(y)$ is the number of such lines incident to a point $(x,y)$, we can apply Corollary \ref{STcor} and argue as before to show that
\begin{equation}
|\{(x,y):r_{A(B+x)}(y)\geq{t}\}|\ll{\frac{|A|^2|B|^2}{t^3}},
\label{A(x+A)}
\end{equation}
for any integer $t\geq{1}$.

Next, we use the bound \eqref{A(x+A)} in the following calculation, which holds for any integer $\triangle>1$:
\begin{align*}
\sum_{x\in{x}} \E^*(A,B+x)&=\sum_{x\in{X}}\sum_{y}r_{A(B+x)}^2(y)
\\&\leq{\sum_{x\in{X}}\sum_{y\,:\,r_{A(B+x)}(y)\le \triangle}r_{A(B+x)}^2(y)+\sum_{(x,y)\,:\,r_{A(B+x)}(y)>{\triangle}}r_{A(B+x)}^2(y)}
\\&\le \sum_{x\in{X}}\triangle\sum_{y}r_{A(B+x)}(y)+\sum_{j\geq{1}}\sum_{(x,y)\,:\,\triangle2^{j-1}<{r_{A(B+x)}(y)}\le \triangle2^j}r_{A(B+x)}^2(y)
\\&\ll{|A||B||X|\triangle+\sum_{j\geq{1}}(2^j\triangle)^2\frac{|A|^2|B|^2}{(2^j\triangle)^3}}
\\&=|A||B||X|\triangle+\frac{|A|^2|B|^2}{\triangle}.
\end{align*}
If we set $\triangle:=\left\lceil\frac{|A|^{1/2}|B|^{1/2}}{|X|^{1/2}}\right\rceil\approx{\frac{|A|^{1/2}|B|^{1/2}}{|X|^{1/2}}}>1$, the proof is complete.
\begin{flushright}
\qedsymbol
\end{flushright}

We observe the following Corollary of Lemmas \ref{sum+} and \ref{sum*}.
Equation \eqref{f:reformulation_1} is sharp when $A$ is arithmetic progression, which shows that Lemma \ref{sum+} is sharp when $A$ and $B$ are the same arithmetic progression, for a suitable choice of $X$.
 
\begin{corollary}
For any $A\subset {\mathbb{R}}$, we have
\begin{equation}\label{f:reformulation_1}
    \left| \frac{A-A}{A-A} \right| \gg |A|^2 \,.
\end{equation}
\begin{equation}\label{f:reformulation_2}
    \left| \left\{\frac{a_2b_2-a_1b_1}{a_1-a_2}:a_1,a_2\in A,\, b_1,b_2\in A  \right\} \right| \gg |A|^2 \,.
\end{equation}
 
\begin{equation}\label{f:reformulation_3}
    |A-A|^3 \cdot \left| \frac{A-A}{A-A} \right|^{1/2} \gg |A^2 - \Delta(A)|^2 \,.
\end{equation}
\label{c:reformulation}
\end{corollary}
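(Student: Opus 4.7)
I handle the three inequalities in sequence, each as a short consequence of one of the preliminary lemmas combined with an elementary double-counting.

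For \eqref{f:reformulation_1}, I apply Lemma \ref{sum+} with $B = A$ and $X = ((A-A)/(A-A))\setminus\{0\}$. Every quadruple $(a_1,a_2,b_1,b_2) \in A^4$ with $a_1 \neq a_2$ and $b_1 \neq b_2$ produces a value $x = (a_1-a_2)/(b_2-b_1) \in X$ satisfying $a_1 - a_2 = x(b_2 - b_1)$, so it is a solution counted by $\E^+(A,xA)$. The number of such quadruples is $\gg |A|^4$, giving $\sum_{x \in X}\E^+(A,xA) \gg |A|^4$. If $|X| > |A|^2$ the conclusion is immediate; otherwise Lemma \ref{sum+} yields $|A|^4 \ll |A|^3 |X|^{1/2}$, i.e., $|X| \gg |A|^2$.

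The proof of \eqref{f:reformulation_2} runs in parallel, but uses Lemma \ref{sum*}. Given $(a_1,a_2,b_1,b_2) \in A^4$ with $a_1 \neq a_2$, rearranging $a_1(x + b_1) = a_2(x + b_2)$ yields $x = (a_2 b_2 - a_1 b_1)/(a_1 - a_2)$, which is an element of the set $X$ on the left of \eqref{f:reformulation_2}; the quadruple is then counted by $\E^*(A, x+A)$. Hence $\sum_{x \in X}\E^*(A,x+A) \gg |A|^4$, and Lemma \ref{sum*} forces $|X| \gg |A|^2$.

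For \eqref{f:reformulation_3} I start from the Katz--Koester identity \eqref{f:A^2-D(A)} in its minus form, $|A^2 - \Delta(A)| = \sum_{s \in A-A} |A - A_s|$, and apply Cauchy--Schwarz to the sum of $|A-A|$ terms to peel off a factor of $|A-A|$:
\[
|A^2 - \Delta(A)|^2 \ \leq\ |A-A| \cdot \sum_{s \in A-A} |A - A_s|^2.
\]
It therefore suffices to prove the sum estimate
\[
\sum_{s \in A-A} |A - A_s|^2 \ \ll\ |A-A|^2 \cdot \left|\frac{A-A}{A-A}\right|^{1/2}.
\]
To obtain this, I would interpret $|A - A_s|^2$ as a count of sextuples $(a_1,a_1',b_1,a_2,a_2',b_2) \in A^6$ with $b_i = a_i' + s$ and $a_i - a_i' \in A - A_s$, and then re-sum over $s \in A-A$ by reparametrizing through the ratio $x \in (A-A)/(A-A)$ that appears in these constraints. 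The target is to match the resulting total to $\sum_{x \in R}\E^+(A,xA)$, so that Lemma \ref{sum+} with $B = A$ and $X = R$ produces the desired factor $|R|^{1/2}$ (the edge case $|R| > |A|^2$ is handled trivially via \eqref{f:reformulation_1} and $|A^2 - \Delta(A)| \leq |A-A|^2$). The principal obstacle is effecting this reparametrization so that each $x \in R$ is charged with the correct multiplicity: a naive count would yield $|R|$ in place of $|R|^{1/2}$, and getting the stronger exponent will require a dyadic decomposition on the values of $|A - A_s|$ and $r_{A-A}(s)$ in the spirit of the interpolation step in the proof of Lemma \ref{sum+}.
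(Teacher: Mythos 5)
Your proofs of \eqref{f:reformulation_1} and \eqref{f:reformulation_2} are correct and are essentially the paper's arguments, just written without the intermediate indicator-function reformulation of Lemmas~\ref{sum+} and \ref{sum*}.

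For \eqref{f:reformulation_3} there is a genuine gap, and it is created by the preliminary Cauchy--Schwarz step. Once you write
\[
|A^2 - \Delta(A)|^2 \le |A-A|\sum_{s\in A-A}|A-A_s|^2,
\]
you are left needing $\sum_{s}|A-A_s|^2 \ll |A-A|^2\,|(A-A)/(A-A)|^{1/2}$, which does not follow from Lemma~\ref{sum+}: after dominating $|A-A_s|\le r_{D-D}(s)$ (where $D=A-A$) this is a bound on the \emph{single} quantity $\E^+(D)=\E^+(D,1\cdot D)$, whereas Lemma~\ref{sum+} only controls the \emph{sum} $\sum_{x\in X}\E^+(D,xD)$ and gives no useful information about one summand. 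You correctly sense that a naive count loses a factor of $|X|^{1/2}$; but the fix is not a dyadic decomposition, it is to avoid the Cauchy--Schwarz entirely. (Also, $|A-A_s|^2$ is not a count of sextuples of the kind you describe, since $|A-A_s|$ is a cardinality of a difference set, not a number of representing pairs, so that step would need repair anyway.)

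The intended route expands the square directly and applies Lemma~\ref{sum+} with $A$ and $B$ both replaced by $D=A-A$ and $X=D/D$. Restricting the double sum to $u,v\in D$ (so the indicator $X(u/v)$ is always $1$) gives
\[
\Bigl(\sum_{u\in D}r_{D-D}(u)\Bigr)^2 \;=\; \sum_{u,v\in D}r_{D-D}(u)\,r_{D-D}(v)\;\ll\; |D|^{3}\,|X|^{1/2}=|A-A|^3\Bigl|\tfrac{A-A}{A-A}\Bigr|^{1/2}.
\]
Now the Katz--Koester inequality gives $r_{D-D}(u)\ge |A-A_u|$, and $\sum_{u\in D}|A-A_u|=\sum_u|A-A_u|=|A^2-\Delta(A)|$ by identity \eqref{f:A^2-D(A)}. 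Chaining these yields \eqref{f:reformulation_3}. The moral is that the crucial factor $|X|^{1/2}$ already sits in Lemma~\ref{sum+}; you must feed the full double sum into that lemma rather than peel off $|A-A|$ first.
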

\begin{proof}
Let $X(u)$ denote the indicator function on $X$.  
The statements of Lemma \ref{sum+} and Lemma \ref{sum*} can be written as
\begin{equation}\label{tmp:13.10.2013_1}
    \sum_{x,y} r_{A-A} (x) r_{B-B} (y) X(x/y) \ll |A|^{3/2} |B|^{3/2} |X|^{1/2}
\end{equation}
and
\begin{equation}\label{tmp:13.10.2013_2}
    \sum_{a_1,a_2\in A,\, b_1,b_2\in B} X\left( \frac{a_2 b_2 - a_1 b_1}{a_1-a_2} \right) \ll |A|^{3/2} |B|^{3/2} |X|^{1/2}
\end{equation}
respectively, provided that $|X| \le |A||B|$.
Putting $B=A$ and $X=(A-A)/(A-A)$ into \eqref{tmp:13.10.2013_1} proves \eqref{f:reformulation_1}.
Similarly, putting $B=A$ and \[X=\left\{\frac{a_2 b_2 - a_1 b_1}{a_1-a_2}:a_1,a_2\in A,\, b_1,b_2\in B \right\}\] into \eqref{tmp:13.10.2013_2}, we obtain (\ref{f:reformulation_2}).

Let $D=A-A$
Taking $A=B= D$, $X=D/D$, summing just over $x,y\in D$ in (\ref{tmp:13.10.2013_1}), and using Katz--Koester trick as well as identity (\ref{f:A^2-D(A)}), we get
$$
    |A-A|^3 \cdot \left| \frac{A-A}{A-A} \right|^{1/2}
        \gg
            \left( \sum_{x\in D} r_{D-D} (x) \right)^2
                \ge
                     \left( \sum_{x\in D} |A-A_x| \right)^2
                        =
                            |A^2 - \D(A)|^2
$$
which coincides with (\ref{f:reformulation_3}).
\end{proof}

Inequality (\ref{f:reformulation_1})  can also be deduced from Beck's Theorem, which states that a set of $N$ points in the plane which does not have a single very rich line, will determine $\Omega(N^2)$ distinct lines. See Exercise 8.3.2 in \cite{TV}. A geometric result of Ungar \cite{ungar}, concerning the number of different directions determined by a set of points in the plane, also yields \eqref{f:reformulation_1} as a corollary. Although the result here is not new, it has been stated in order to illustrate the sharpness of Lemma \ref{sum+}. Similar results to \eqref{f:reformulation_2} were established in \cite{tim}; it seems likely that \eqref{f:reformulation_2} is suboptimal.

\subsection*{Proof of Lemma \ref{mainlemma}}

Recall that Lemma \ref{mainlemma} states that
$$\E^*(A)|A(B+C)|^2\gg{\frac{|A|^4|B||C|}{\log{|A|}}}.$$

Let $S^\star$ denote the number of solutions to the equation
\begin{equation}
a_1(b_1+c_1)=a_2(b_2+c_2)\not=0,
\label{solutions}
\end{equation}
such that $a_i\in{A}$, $b_i\in{B}$ and $c_i\in{C}$.
This proof uses a familiar strategy: in order to show that a given set is large, show that there cannot be too many solutions to a particular equation.
The easy part is to bound $S^\star$ from below, using an elementary application of the Cauchy-Schwarz inequality.
First note that
\[
\sum_{x\in A(B+C)}r_{A(B+C)}(x)=|A||B||C|.
\]
Since there are at most $|A||B\cap -C|+|B||C|$ solutions to $a(b+c)=0$, we have
\[
\sum_{x\in A(B+C)\setminus\{0\}}r_{A(B+C)}(x)\geq\frac 12|A||B||C|.
\]
Now we apply the Cauchy-Schwarz inequality:
\begin{align}
\frac 14(|A||B||C|)^2&\leq\left(\sum_{x\in{A(B+C)\setminus\{0\}}}r_{A(B+C)}(x)\right)^2
\\&\leq{|A(B+C)|\sum_{x\not=0}r_{A(B+C)}^2(x)}
\\&=|A(B+C)|S^\star. \label{triv}
\end{align}
The rest of the proof is concerned with finding a satisfactory upper bound for the quantity $S^\star$.
We will eventually conclude that
\begin{equation}
\label{aim}
 S^\star\ll{\E^*(A)^{1/2}|B|^{3/2}|C|^{3/2}(\log{|A|})^{1/2}}.
\end{equation}

If this is proven to be true, one can combine the upper and lower bounds on $S^\star$ from \eqref{aim} and \eqref{triv} respectively, and then a simple rearrangement completes the proof of the lemma.

It remains to prove \eqref{aim}.
To do this, first observe that \eqref{solutions} can be rewritten in the form
\[
\frac{a_1}{a_2}=z=\frac{b_2+c_2}{b_1+c_1}.
\]
Note that we can divide by $b_1+c_1$ because we excluded $0$ in \eqref{solutions}.
If we set $Q=(B+C)/(B+C)$ and \[r_Q(z)=|\{(b_1,b_2,c_1,c_2)\in{B\times{B}\times{C}\times{C}}: z=(b_2+c_2)/(b_1+c_1)\}|,\] then
\[
S^\star=\sum_{z\in (A:A)\cap Q} r_{A:A}(z)r_Q(z).
\]
Applying Cauchy-Schwarz, we have
\begin{equation}
  \label{CS 2}
  S^\star\leq \left(\sum_{z\in A:A} r_{A:A}^2(z)\right)^{1/2}\left(\sum_{z\in A:A} r_Q^2(z)\right)^{1/2}=\E_2^*(A)^{1/2}\left(\sum_{z\in A:A} r_Q^2(z)\right)^{1/2}.
\end{equation}

We will bound the RHS of \eqref{CS 2} using the following distributional estimate on $r_Q(z)$:
\begin{claim*}
  Let $Z_t=\{z\colon r_Q(z)\geq t\}$.
Then for all $t\geq 1$,
\[
|Z_t|\ll\frac{|B|^3|C|^3}{t^2}.
\]
\end{claim*}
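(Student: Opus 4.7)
The plan is to prove the distributional estimate $|Z_t|\ll |B|^3|C|^3/t^2$ by a direct Szemerédi--Trotter argument, very much in the spirit of the earlier claim in the proof of Lemma \ref{sum+}. The idea is to rewrite the defining equation $b_2+c_2=z(b_1+c_1)$ as $b_2 = zb_1 + (zc_1-c_2)$, so that a quadruple satisfying the relation corresponds to an incidence between a point in the $(x,y)$-plane and a line of prescribed slope and intercept.

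Concretely, I would take the point set $P=B\times B$ (so that $|P|=|B|^2$) and the line family $L=\{\ell_{z,c_1,c_2}:y=zx+(zc_1-c_2)\}$ indexed by $(z,c_1,c_2)\in Z_t\times C\times C$. A point $(b_1,b_2)\in P$ lies on $\ell_{z,c_1,c_2}$ precisely when the relation holds, so the incidence count, weighted by the multiplicity of lines, is exactly $\sum_{z\in Z_t}r_Q(z)\geq t|Z_t|$. The number of distinct lines is at most $|Z_t||C|^2$, and each distinct line has multiplicity at most $|C|$ since, for fixed slope $z$ and intercept $\alpha$, the pairs $(c_1,c_2)\in C\times C$ with $zc_1-c_2=\alpha$ are parametrised by $c_1\in C$. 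An application of Corollary~\ref{STcor} (or the dual $t$-rich-lines form of the Szemerédi--Trotter theorem) then yields a chain of possibilities, which, after the usual case analysis of the three terms $(|P||L|)^{2/3}$, $|P|$, $|L|$, produces bounds of the shape $|Z_t|\ll|B|^4|C|^7/t^3$ or $|Z_t|\ll|B|^2|C|/t$, together with the constraint $t\ll|C|^3$.

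The remaining step is to combine these Szemerédi--Trotter bounds with the trivial pigeonhole bound $|Z_t|\leq|B|^2|C|^2/t$, and with the symmetric versions obtained by swapping the roles of $B$ and $C$ (i.e.\ taking $P=C\times C$ and indexing lines by $B\times B\times Z_t$). A short check shows that in every dyadic range of $t$ between $1$ and the maximum value $\max_zr_Q(z)\leq E^+(B,C)$, at least one of these estimates is already dominated by $|B|^3|C|^3/t^2$; the trivial bound takes care of the low-$t$ regime $t\leq|B||C|$, while the Szemerédi--Trotter main term controls the high-$t$ regime. The main obstacle I expect is the clean extraction of the $1/t^2$ dependence: the naive multiplicity bound by $|C|$ only yields $|Z_t|\ll|B|^3|C|^3/t$, losing a factor of $t$, so some care is needed in balancing the distinct/multi incidence counts and in choosing which of the two symmetric setups to apply in each regime.
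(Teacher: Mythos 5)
Your proposal takes a genuinely different route from the paper's, attempting a direct Szemer\'edi--Trotter argument on the quadruple count $r_Q(z)$, with points $P=B\times B$ and lines indexed by $Z_t\times C\times C$. Unfortunately, as you partly anticipate yourself, this breaks down in a way that the case analysis you sketch cannot repair. Your line family has $|Z_t||C|^2$ indices, so $|L'|\le |Z_t||C|^2$ distinct lines, each of multiplicity at most $|C|$; passing to unweighted incidences gives
\[
t|Z_t|\le\sum_{z\in Z_t}r_Q(z)\le |C|\,I(P,L')\ll |B|^{4/3}|C|^{7/3}|Z_t|^{2/3}+|B|^2|C|+|Z_t||C|^3 ,
\]
and the ``$|L|$'' term $|Z_t||C|^3$ renders this vacuous in precisely the critical range. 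Take $|B|=|C|=N$ and $t\sim N^{5/2}$: the target is $|Z_t|\ll N^6/t^2=N$, Markov gives only $N^{3/2}$, the main Szemer\'edi--Trotter term gives only $|Z_t|\ll N^{11}/t^3=N^{7/2}$, and $|Z_t||C|^3=|Z_t|N^3$ already exceeds the left-hand side $N^{5/2}|Z_t|$, so the inequality yields nothing. Swapping $B$ and $C$ gives the identical estimate and does not help. So your assertion that ``at least one of these estimates is already dominated by $|B|^3|C|^3/t^2$ in every dyadic range'' fails throughout $|B||C|<t<|B||C|^2$; the factor of $t$ you worry about at the end is genuinely lost and cannot be recovered by balancing the two symmetric setups.

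The paper avoids this by not encoding the quadruple equation as a single incidence problem. It uses the Elekes--Sharir splitting: every solution of $z=(b_2+c_2)/(b_1+c_1)$ satisfies $b_2-zc_1=zb_1-c_2=y$ for some $y$, whence
\[
r_Q(z)\le\sum_y r_{zB-C}(y)\,r_{B-zC}(y)\le\tfrac12\bigl(\E^+(zB,-C)+\E^+(B,-zC)\bigr).
\]
Summing over $z\in Z_t$ and applying Lemma~\ref{sum+} (whose hypothesis $|X|\le|A||B|$ is satisfied after first disposing of the case $|Z_t|\ge|B||C|$ via Markov) gives $t|Z_t|\ll|B|^{3/2}|C|^{3/2}|Z_t|^{1/2}$, which rearranges to $|Z_t|\ll|B|^3|C|^3/t^2$. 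The crucial gain is the exponent $|Z_t|^{1/2}$ in place of your $|Z_t|^{2/3}$, which comes from the fact that the line family inside Lemma~\ref{sum+} has only $|B||C|$ lines (indexed by $(b,c)$ alone), not $|Z_t||C|^2$: the Elekes--Sharir decoupling removes $Z_t$ from the line count. If you want to keep the incidence viewpoint, this reduction is the missing idea.
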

If we assume this claim, then by dyadic decomposition:
 \begin{align*}
   \sum_{z\in A:A}r_Q^2(z)&\approx \sum_{j\geq 1}^{\lceil\log|A:A|\rceil}2^{2j}|Z_{2^j}|\\
 &\ll \sum_{j\geq 1}^{\lceil\log|A:A|\rceil}2^{2j}\frac{|B|^3|C|^3}{2^{2j}}\leq 2|B|^3|C|^3\log|A|.
 \end{align*}
Combining this with \eqref{CS 2} yields the desired bound on $S^\star$:
\[
S^\star\ll \E_2^*(A)^{1/2}|B|^{3/2}|C|^{3/2}(\log|A|)^{1/2}.
\]
This concludes the proof of Lemma \ref{mainlemma}, pending the claim.
\begin{flushright}
\qedsymbol
\end{flushright}

Now we will prove the claimed estimate for the distribution of $r_Q(z)$.
\begin{proof}[Proof of Claim]
  First we will get an easy estimate for $|Z_t|$ from Markov's inequality.
Since\footnote{$r_Q(z)$ is supported on $Q$, so if $t\geq 1$ we have $Z_t\subseteq Q$.}
\[
t|Z_t|\leq\sum_{z\in Z_t}r_Q(z)\leq\sum_{z\in Q}r_Q(z)=|B|^2|C|^2,
\]
we have
\begin{equation}
  \label{r_Q Markov}
  |Z_t|\leq\frac{|B|^2|C|^2}t.
\end{equation}
Note that if $|Z_t|\geq |B||C|$, then it follows from \eqref{r_Q Markov} that $t\leq |B||C|$.
But then
\[
\frac{|B|^2|C|^2}t\leq\frac{|B|^3|C|^3}{t^2},
\]
so we have proved the claim in the case $|Z_t|\geq |B||C|$.

Now we will prove the claim when $|Z_t|\leq |B||C|$ using Lemma \ref{sum+}.
To do this we make a key observation, which is inspired by the Elekes-Sharir set-up from \cite{rectangles}:
every solution of the equation
\[z=\frac{b_2+c_2}{b_1+c_1}\]
 is a solution to the equation
\[
b_2-zc_1=zb_1-c_2=y
\]
for some $y$.
Thus
\[
r_Q(z)\leq\sum_y r_{zB-C}(y)r_{B-zC}(y).
\]
By the arithmetic-geometric mean inequality
\[r_{zB-C}(y)r_{B-zC}(y)\leq\frac{r_{zB-C}^2(y)+r_{B-zC}^2(y)}2,\]
so
\[
r_Q(z)\leq \frac{\E^+(zB,-C)+\E^+(B,-zC)}2.
\]
Now if $|Z_t|\leq |B||C|$, we can sum over $Z_t$ and apply Lemma \ref{sum+}:
\[
t|Z_t|\leq\sum_{z\in Z_t}r_Q(z)\leq\frac 12\sum_{z\in Z_t}\E^+(zB,-C)+\frac 12\sum_{z\in Z_t}\E^+(B,-zC)\ll |B|^{3/2}|C|^{3/2}|Z_t|^{1/2}.
\]
Rearranging yields the estimate
\[
  |Z_t|\ll\frac{|B|^3|C|^3}{t^2},
\]
as claimed.
\end{proof}

We remark here that this is not the only proof we have found of Lemma \ref{mainlemma} during the process of writing this paper. In particular, it is possible to write a ``shorter" proof which is a relatively straightforward application of an upper bound from \cite{rectangles} on the number of solutions to the equation
$$(a_1-b_1)(c_1-d_1)=(a_2-b_2)(c_2-d_2),$$
such that $a_i\in{A},\cdots,d_i\in{D}$.

Although this proof may appear to be shorter, it relies on the bounds from \cite{rectangles}, which in turn rely on the deeper concepts used by Guth and Katz \cite{GK} in their work on the Erd\H{o}s distinct distance problem. For this reason, we believe that this proof is the more straightforward option.
In addition, this approach leads to better logarithmic factors and works over the complex numbers (see the discussion at the end of the paper).

The following corollary gives an analogous result for third moment multiplicative energy, however, unlike Lemma \ref{mainlemma}, this result does not appear to be optimal.

\begin{corollary}\label{E3} For any finite sets $A,B,C\subset{\mathbb{R}}$, we have

$$\E^*_3(A)|A(B+C)|^4\gg{\frac{|A|^{6}|B|^2|C|^2}{(\log{|A|})^2}}.$$
\end{corollary}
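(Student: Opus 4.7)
My plan is to deduce Corollary \ref{E3} directly from Lemma \ref{mainlemma} using a standard Cauchy--Schwarz comparison between the second and third moments of $r_{A:A}$. The idea is that $\E^*_3(A)$ dominates $\E^*_2(A)^2$ up to a factor of $|A|^2$, so squaring the conclusion of Lemma \ref{mainlemma} puts us in a position to replace $\E^*_2(A)^2$ by $|A|^2 \E^*_3(A)$ and recover exactly the asserted bound.

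First, I square the inequality of Lemma \ref{mainlemma}, obtaining
\[
\E^*_2(A)^2 \cdot |A(B+C)|^4 \gg \frac{|A|^8 |B|^2 |C|^2}{(\log|A|)^2}.
\]
Next, writing $r(z) := r_{A:A}(z)$ and applying Cauchy--Schwarz with the factorisation $r(z)^2 = r(z)^{1/2}\cdot r(z)^{3/2}$, I get
\[
\E^*_2(A)^2 = \Bigl(\sum_z r(z)^2\Bigr)^2 \le \Bigl(\sum_z r(z)\Bigr)\Bigl(\sum_z r(z)^3\Bigr) = |A|^2\, \E^*_3(A),
\]
since $\sum_z r_{A:A}(z) = |A|^2$. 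Substituting and cancelling one factor of $|A|^2$ yields
\[
\E^*_3(A)\,|A(B+C)|^4 \gg \frac{|A|^6 |B|^2 |C|^2}{(\log|A|)^2},
\]
which is the claimed inequality.

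There is essentially no obstacle here: the proof is a two-line consequence of Lemma \ref{mainlemma} and a standard moment-comparison inequality, which is presumably why the authors list this result as a corollary and note that it does not appear to be optimal. If one wanted to try to do better, the natural attempt would be to re-run the proof of Lemma \ref{mainlemma} itself with H\"older's inequality (with exponents $3$ and $3/2$) in place of Cauchy--Schwarz at the step bounding $S^{\star} = \sum_z r_{A:A}(z) r_Q(z)$, so that $\E^*_3(A)^{1/3}$ appears directly; bounding $\sum_z r_Q^{3/2}(z)$ via the same distributional estimate on $r_Q$ would likely remove one of the logarithmic factors and might give a different (possibly incomparable) bound, but the stated corollary falls out of the cleaner two-step argument above.
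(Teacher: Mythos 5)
Your proof is correct and is essentially identical to the paper's: both square Lemma \ref{mainlemma} and apply Cauchy--Schwarz to $\sum_z r_{A:A}(z)^{1/2}\, r_{A:A}(z)^{3/2}$ to obtain $\E^*_2(A)^2 \le |A|^2 \E^*_3(A)$, then divide through. Your closing remark about re-running the proof of Lemma \ref{mainlemma} with H\"older is a reasonable observation, but is not part of the paper's argument for this corollary.
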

\begin{proof}
By the Cauchy-Schwarz inequality,
\begin{align*}
\sum_xr_{A:A}^2(x)&=\sum_xr_{A:A}^{3/2}(x)r_{A:A}^{1/2}(x)
\\&\leq{\left(\sum_xr_{A:A}^3(x)\right)^{1/2}\left(\sum_xr_{A:A}(x)\right)^{1/2}}
\\&=\left(\E_3^*(A)\right)^{1/2}|A|,
\end{align*}
so that $(\E^*(A))^2\leq{\E^*_3(A)|A|^2}$.

Meanwhile, Lemma \ref{mainlemma} gives $(\E^*(A))^2|A(B+C)|^4\gg{\frac{|A|^8|B|^2|C|^2}{(\log{|A|})^2}}$. Comparing these two bounds gives the desired result.
\end{proof}

\section{Proofs of Main Results}

The first task in this section is to prove Theorem \ref{main1}. This will require an application of the Balog-Szemer\'{e}di-Gowers Theorem. Following the conventional notation $\Gr$ represents a group, whose operation here is written additively\footnote{For our purposes the role of $\Gr$ will usually be played by the set of non-zero real numbers under multiplication.}, and $\E^+(A)$ has the same meaning as was given in section 1. We will need the following result.

\begin{theorem}
    Let $A\subseteq \Gr$ be a set, $K\geq{1}$ and $\E^+(A)\geq{\frac{|A|^{3}}{K}}$.
    Then there is $A'\subseteq A$ such that
    $$
        |A'| \gtrapprox{\frac{|A|}{K}}\,,
    $$
    and
    $$
        |A'-A'| \lessapprox K^4 \frac{|A'|^3}{|A|^2} \,.
    $$
\label{BSG}
\end{theorem}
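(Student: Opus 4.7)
This is a refined Balog-Szemer\'edi-Gowers theorem in the spirit of Schoen's sharp $K^4$ version. My plan is the standard three-stage BSG argument: extract a dense graph of popular differences, apply the graph-theoretic BSG lemma, and finish with a Pl\"unnecke-Ruzsa calculus.

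Stage 1 is to dyadically decompose the identity $\E^+(A)=\sum_x r^2_{A-A}(x)\geq |A|^3/K$ to isolate a level set $D_\tau=\{x:r_{A-A}(x)\approx \tau\}$, with $\tau\gtrapprox |A|/K$, that accounts for an $\Omega(1/\log|A|)$-proportion of the energy. Forming the bipartite graph $\Gamma\subseteq A\times A$ whose edges are pairs $(a,a')$ with $a-a'\in D_\tau$, one gets $|E(\Gamma)|\gtrapprox |A|^2/K$. In Stage 2 I would invoke the graph-theoretic core of BSG, proved by two rounds of random sampling followed by Markov and Cauchy-Schwarz, to extract subsets $A_1,A_2\subseteq A$ of size $\gtrapprox |A|/K$ such that a positive proportion of pairs $(a_1,a_2)\in A_1\times A_2$ are joined by $\gtrapprox |A|^3/K^3$ length-three paths in $\Gamma$. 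Each such path $a_1\to x\to y\to a_2$ yields the decomposition $a_1-a_2=(a_1-x)+(x-y)+(y-a_2)$ with each summand in $D_\tau$, giving strong additive control on $A_1-A_2$.

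Stage 3 converts the path count into a sumset bound via the sharp Pl\"unnecke-Ruzsa inequality (Petridis form) combined with the Ruzsa triangle inequality, and then passes from the pair $A_1,A_2$ to a single $A'\subseteq A$ with $|A'|\gtrapprox |A|/K$. Careful tracking of the $|A'|/|A|$-loss at each step then delivers the conclusion in the refined form $|A'-A'|\lessapprox K^4|A'|^3/|A|^2$; notably, when $|A'|\approx |A|/K$ this reduces to $|A'-A'|\lessapprox K|A|$, matching the Cauchy-Schwarz lower bound $|A-A|\gtrsim |A|^4/\E^+(A)=K|A|$. The main obstacle is precisely this bookkeeping: naive invocations of BSG produce $K^5$ or worse, and it is essential both to use the sharp Pl\"unnecke-Ruzsa (rather than the classical Pl\"unnecke graph inequality) and to estimate $|A'-A'|$ through a mixed sumset $|A+A'|$ via the Ruzsa triangle inequality, rather than going through $|A'+A'|$ directly. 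A secondary subtlety is that the BSG graph lemma naturally outputs two distinct sets $A_1,A_2$, and an additional Ruzsa-covering step is needed to produce a single $A'$ for which the two-sided difference set is controlled.
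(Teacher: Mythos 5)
The paper does not actually give a proof of Theorem~\ref{BSG}; it remarks only that ``the proof~\ldots is short, arising from an application of Lemmas 2.2 and 2.4 in \cite{BG}.'' So you are not competing against a written-out argument, but attempting a self-contained proof of an imported result, and that attempt has a real gap.

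Your three-stage outline (dyadic pigeonholing to a popular level set, graph-theoretic BSG, then Ruzsa/Pl\"unnecke calculus) is the correct template, but as sketched it does not deliver the bound $|A'-A'|\lessapprox K^4|A'|^3/|A|^2$. Two concrete problems. First, the stated path count is wrong: if the popularity graph on $A\times A$ has edge density $\delta\gtrsim 1/K$, the graph-theoretic BSG lemma produces a pair $(A_1,A_2)$ for which each $(a_1,a_2)$ is joined by $\gtrapprox \delta^4|A|^2$ (or $\delta^5|A|^2$, depending on the version) length-$3$ paths. Your figure of ``$\gtrapprox|A|^3/K^3$ length-three paths'' exceeds the trivial bound $|A|^2$ (the number of possible choices of middle vertices) whenever $|A|>K^3$, so it cannot be right. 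Second, and more seriously, Stage~3 is precisely where the claimed sharpness must be won, and your description of it is too vague to carry the day. After Stage~2 one holds a bipartite pair $(A_1,A_2)$ with $|A_1-A_2|$ controlled, and the Ruzsa triangle inequality $|A_1-A_1|\leq |A_1-A_2|^2/|A_2|$ squares the doubling. Working it through with the $\delta^4$-paths version of the graph lemma, one finds $|A_1-A_2|\lessapprox K^3|A_1|^2/|A|$ and hence $|A_1-A_1|\lessapprox K^6|A_1|^3/|A|^2$ --- the right functional form, but with $K^6$ in place of $K^4$. Your remarks about ``sharp Pl\"unnecke--Ruzsa (Petridis form)'' and ``a mixed sumset $|A+A'|$'' gesture at a way to avoid squaring, but as written there is no bound on $|A+A_1|$ or $|A-A_1|$ available to feed into a triangle inequality, so the step is unsupported. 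This $K^2$ gap is exactly what makes the theorem nontrivial; closing it is not ``careful bookkeeping'' but requires a stronger intermediate statement (which, evidently, is what Bourgain--Garaev's Lemma~2.2 supplies), such as a two-set conclusion with the loss already divided between the two sides in a way compatible with a single-set finish.
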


We remark that the first preprint of this paper used a different version of the Balog-Szemer\'{e}di-Gowers Theorem, due to Schoen \cite{schoen_BSzG}. Shortly after uploading this, we were informed by M.~Z.~Garaev of a quantitatively improved version of the Balog-Szemer\'{e}di-Gowers Theorem, in the form of Theorem \ref{BSG}. This leads to a small improvement in the statement of Theorem \ref{main1}, since our earlier result had an exponent of $\frac{3}{2}+\frac{1}{234}$. The proof of Theorem \ref{BSG} result is short, arising from an application of Lemmas 2.2 and 2.4 in \cite{BG}. It is possible that further small improvements can be made to Theorem \ref{main1} by combining more suitable versions of the Balog-Szemer\'{e}di-Gowers Theorem with our approach.

We will also need a sum-product estimate which is effective in the case when the product set or ratio set is relatively small. The best bound for our purposes is the following\footnote{In the notation of \cite{LiORN2}, we apply this bound with $C=-f(A)$ and $f(x):=\log{x}$.} (see \cite{LiORN2}, Theorem 1.2):

\begin{theorem} \label{convex} Let $A\subset{\mathbb{R}}$. Then
$$|A:A|^{10}|A+A|^9\gtrapprox{|A|^{24}}.$$
\end{theorem}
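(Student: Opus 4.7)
Theorem \ref{convex} is cited from Li and O'Neill; I would prove it as a special case of a more general sum--product bound for convex sets. Setting $L = \log A$ (first passing to the positive or negative half of $A$, whichever is larger, so that $\log$ is well-defined and injective), one has $|L - L| = |A : A|$, and the target becomes
$$|L - L|^{10}\,|A + A|^9 \gtrapprox |A|^{24},$$
a sum--product bound for the strictly convex image $A = \exp(L)$ of $L$. This is a statement in the family of convex sum--product estimates initiated by Elekes, Nathanson, and Ruzsa and sharpened through higher-moment energies by Schoen and Shkredov.

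My plan is to work through the third-moment additive energy $\E^+_3(A) = \sum_x r^3_{A-A}(x)$. On one side, repeated Cauchy--Schwarz give the routine lower bound
$$\E^+_3(A) \;\geq\; \frac{\E^+(A)^2}{|A|^2} \;\geq\; \frac{|A|^6}{|A-A|^2},$$
where the second inequality uses $\E^+(A) \geq |A|^4/|A-A|$; one can then translate $|A-A|$ into $|A+A|$ using Plünnecke's inequality. On the other side, a Szemer\'edi--Trotter argument (Theorem \ref{ST1}) provides the complementary \emph{upper} bound: one encodes solutions to $a_1-a_2 = a_3-a_4 = a_5-a_6$ in $A-A$ as incidences between a point set parametrized by $L-L$ and a line set parametrized by $A+A$, and exploits the strict convexity of $\exp$ to guarantee that lines corresponding to distinct parameters are in sufficiently general position. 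A dyadic decomposition over the level sets $\{x : r_{A-A}(x) \approx 2^j\}$ then produces an upper bound of the shape
$$\E^+_3(A) \;\lessapprox\; |L - L|^{a}\,|A + A|^{b}\,|A|^{c}$$
for specific exponents $a,b,c$. Matching this against the lower bound and rearranging yields the theorem.

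The main obstacle is the Szemer\'edi--Trotter step: one must identify the correct point-line configuration and verify that the convexity of $\exp$ prevents degenerate concurrences of lines. The exponents $10$ and $9$ in the statement arise from optimizing this incidence bound together with the Plünnecke-type manipulations used to interchange $|A-A|$ and $|A+A|$. Any sharper incidence bound in this step --- for example through fourth-moment or higher energies in the style of Schoen--Shkredov --- would directly improve Theorem \ref{convex} and hence, through the argument sketched around Theorem \ref{BSG}, the exponent $\tfrac{3}{2}+\tfrac{1}{178}$ appearing in Theorem \ref{main1}.
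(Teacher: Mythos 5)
The paper does not prove Theorem \ref{convex}; it imports it verbatim from \cite{LiORN2} (Theorem 1.2 there, applied with $f=\log$ and $C=-f(A)$, as the footnote says), so there is no ``paper's own proof'' here to compare against. Your reconstruction does correctly identify the relevant framework: passing to $L=\log A$ so that $|L-L|=|A:A|$, viewing $A=\exp(L)$ as the image of $L$ under a strictly convex function, and then running a Schoen--Shkredov-style argument through higher-moment energies and the Szemer\'edi--Trotter theorem, with strict convexity supplying the non-degeneracy needed for the incidence count. That is indeed the engine behind \cite{LiORN2}.

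However, the specific route you sketch has a real gap. You propose to lower-bound $\E_3^+(A)\geq|A|^6/|A-A|^2$ by Cauchy--Schwarz and then ``translate $|A-A|$ into $|A+A|$ using Pl\"unnecke.'' Pl\"unnecke--Ruzsa gives $|A-A|\leq|A+A|^2/|A|$, so this yields only $\E_3^+(A)\geq|A|^8/|A+A|^4$; chasing exponents, to recover $|A:A|^{10}|A+A|^9\gtrapprox|A|^{24}$ you would then need an incidence upper bound of the shape $\E_3^+(A)\lessapprox|A:A|^{10}|A+A|^5|A|^{-16}$, which is impossible (it is already violated when $|A:A|\approx|A+A|\approx|A|$, since $\E_3^+(A)\geq|A|^3$ trivially). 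The paper itself flags the asymmetry: the difference-set version $|A:A|^6|A-A|^5\gg|A|^{14}/\log^2|A|$ is sharper, and one checks that Pl\"unnecke-converting it gives a strictly weaker sumset bound than the stated $|A|^{24}$. So the $A+A$ version cannot be obtained by detouring through $|A-A|$; it needs a direct argument in which the sumset appears natively (e.g. via the Katz--Koester trick and Lemma~\ref{corpop}, which relate $\E_3$ to $\sum_s|A+A_s|$). In addition, the incidence configuration you describe (``points parametrized by $L-L$, lines by $A+A$'') is not well-posed as stated --- a one-parameter set cannot parametrize a family of lines in $\R^2$ --- and this step is where all the exponent content lives, so it would need to be spelled out precisely before the sketch could be considered a proof.
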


\subsection*{Proof of Theorem \ref{main1}}
Recall that Theorem \ref{main1} states that  
$$|A(A+A)|\gtrapprox{|A|^{\frac{3}{2}+\frac{1}{178}}}.$$
Write $\E^*(A)=\frac{|A|^3}{K}$. Applying Lemma \ref{mainlemma} with $A=B=C$, it follows that
$$\frac{|A|^3}{K}|A(A+A)|^2\gtrapprox{|A|^6},$$
and so
\begin{equation}
|A(A+A)|\gtrapprox{|A|^{3/2}K^{1/2}}.
\label{lb1}
\end{equation}
On the other hand, by Lemma \ref{BSG}, there exists a subset $A'\subset{A}$ such that
\begin{equation}
|A'|\gg{\frac{|A|}{K}}
\label{subset1}
\end{equation}
and
\begin{equation}
|A':A'|\lessapprox{K^4\frac{|A'|^3}{|A|^2}}.
\label{subset2}
\end{equation}
Now, Theorem \ref{convex} can be applied, and this states that
$$|A':A'|^{10}|A'+A'|^9\gtrapprox{|A'|^{24}}.$$
Applying \eqref{subset2}, it follows that
$$\frac{|A'|^{30}}{|A|^{20}}K^{40}|A'+A'|^9\gtrapprox{|A'|^{24}},$$
so that after rearranging, and applying the crude bound $|A'|\leq{|A|}$, we obtain
$$K^{40}|A'+A'|^9\gtrapprox{\frac{|A|^{20}}{|A'|^{6}}}\geq{|A|^{14}}$$
Using another crude bound,
\begin{equation}\label{f:crude}
    |A(A+A)| \geq |A+A| \geq |A'+A'|,
\end{equation}
yields
\begin{equation}
|A(A+A)|\gtrapprox{\frac{|A|^{14/9}}{K^{40/9}}}.
\label{lb2}
\end{equation}
Finally, we note that the worst case occurs when $K\approx{|A|^{\frac{1}{89}}}$. If $K\geq{|A|^{\frac{1}{89}}}$, then \eqref{lb1} implies that
$$|A(A+A)|\gtrapprox{|A|^{3/2}K^{1/2}}\geq{|A|^{\frac{3}{2}+\frac{1}{178}}},$$
whereas, if $K\leq{|A|^{\frac{1}{89}}}$, one can check that \eqref{lb2} tells us
$$|A(A+A)|\gtrapprox{\frac{|A|^{14/9}}{K^{40/9}}}\geq{|A|^{\frac{3}{2}+\frac{1}{178}}}.$$
We have checked that $|A(A+A)|\gg{|A|^{\frac{3}{2}+\frac{1}{178}}}$ holds in all cases, and so the proof of Theorem \ref{main1} is complete.
\begin{flushright}
\qedsymbol
\end{flushright}

Let us show that the main result can be refined to obtain
\begin{equation}\label{f:main_pred_1_new}
    |A(A+A)| \gg |A|^{\frac{3}{2}+\frac{1}{178}+\eps_0} \,,
\end{equation}
where $\eps_0>0$ is an absolute constant.
To do this we need in an asymmetric version of Balog--Szemer\'{e}di--Gowers theorem, see \cite{TV}, Theorem 2.35.

\begin{theorem}
    Let $A,B\subseteq \Gr$ be two sets, $|B| \le |A|$, and $M\ge 1$ be a real number.
    Let also $L=|A|/|B|$ and $\eps \in (0,1]$ be a real parameter.
    Suppose that
\begin{equation}\label{cond:BSzG_as}
    \E (A,B) \ge \frac{|A| |B|^2}{M} \,.
\end{equation}
    Then there are two sets $H\subseteq \Gr$, $\L \subseteq \Gr$ and $z\in \Gr$ such that
\begin{equation}\label{f:BSzG_as_1}
    |(H+z) \cap B| \gg_\eps M^{-O_\eps (1)} L^{-\eps} |B| \,,
    \quad \quad
    |\L| \ll_{\eps} M^{O_\eps (1)} L^\eps \frac{|A|}{|H|} \,,
\end{equation}
\begin{equation}\label{f:BSzG_as_2}
    |H - H| \ll_{\eps} M^{O_\eps (1)} L^\eps \cdot |H| \,,
\end{equation}
     and
\begin{equation}\label{f:BSzG_as_3}
    |A\cap (H+\L)| \gg_{\eps} M^{-O_\eps (1)} L^{-\eps} |A| \,.
\end{equation}
\label{t:BSzG_as}
\end{theorem}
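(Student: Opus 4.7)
My plan is to establish this asymmetric Balog--Szemer\'edi--Gowers theorem by following the template of the symmetric BSG proof (compare \cite{TV}, Theorem~2.29), with careful attention to the asymmetry so that the loss factor $L=|A|/|B|$ appears only as $L^{\pm \eps}$ and never polynomially. The first step is a popularity reduction: the hypothesis \eqref{cond:BSzG_as} combined with a dyadic pigeonhole on the level sets of $r_{A-B}(x)$ produces a set $P\subseteq A-B$ on which $r_{A-B}(x)\gg M^{-1}|B|$, and such that the pairs $(a,b)\in A\times B$ with $a-b\in P$ comprise a $\gg M^{-1}$ proportion of all pairs in $A \times B$.

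Second, I would run the path-counting BSG argument on the bipartite graph whose edges are the popular pairs. Counting walks of length three shows there is a refinement $B'\subseteq B$ of size $|B'|\gg_\eps M^{-O_\eps(1)} L^{-\eps}|B|$ such that every pair $b_1,b_2\in B'$ admits $\gg_\eps M^{-O_\eps(1)} L^{-\eps}|A||B|$ representations of $b_1-b_2$ as a combination of four popular differences, which forces
\[
|B'-B'|\ll_\eps M^{O_\eps(1)} L^\eps |B|.
\]
Taking $H$ to be a Freiman-refined copy of $B'$ and $z$ an appropriate translate gives \eqref{f:BSzG_as_1} and, via one Ruzsa triangle inequality, also \eqref{f:BSzG_as_2}. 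The parameter $\eps$ enters precisely as the popularity threshold in the path-counting step: a lower threshold enlarges $B'$ at the expense of inflating the doubling constant.

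For the covering statement \eqref{f:BSzG_as_3}, I would apply the Ruzsa covering lemma. Because many pairs $(a,b)\in A\times B'$ satisfy $a-b\in P$ with $|P|$ small (by Pl\"unnecke), and $B'$ has small doubling, an iterated asymmetric Pl\"unnecke bound yields $|A'+H|\ll_\eps M^{O_\eps(1)} L^\eps |H|$ on a large subset $A'\subseteq A$ of size $|A'| \gg_\eps M^{-O_\eps(1)} L^{-\eps} |A|$. The covering lemma then expresses $A'$ as a subset of the union of at most $\ll_\eps M^{O_\eps(1)} L^\eps |A|/|H|$ translates of $H$, which gives \eqref{f:BSzG_as_3} with the required bound on $|\L|$.

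The main obstacle is the bookkeeping of the asymmetry: one cannot afford to treat $A$ and $B$ on equal footing, as that would introduce factors of $L^{O(1)}$ rather than $L^{\eps}$. The popularity and path-counting steps must therefore be performed with the smaller set $B$ as the central object, and one must invoke the asymmetric form of Pl\"unnecke's inequality to pass from the mixed sumset estimate $|A'-B'|$ to a small-doubling estimate for a single set. The continuous trade-off between the polynomial exponent in $M$ and the exponent of $L$ is then obtained by tuning the popularity threshold as a function of $\eps$.
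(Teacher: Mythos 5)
The paper does not prove this theorem; it is quoted directly as Theorem~2.35 of Tao and Vu~\cite{TV}, so there is no internal argument to compare your sketch against.

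Taken on its own terms, your sketch has a genuine gap precisely at the step where the difficulty of the asymmetric version lives. You assert that walk-counting in the popular bipartite graph ``forces $|B'-B'|\ll_\eps M^{O_\eps(1)}L^\eps|B|$'' and that ``$\eps$ enters precisely as the popularity threshold.'' Neither claim survives the computation. After the dyadic pigeonhole, the popular level $\lambda$ is pinned to $\lambda\approx |B|/M$ up to logarithms, so the popular difference set $P$ has size up to $\approx M|A|$ and is not a tunable quantity. The standard walk-counting then gives, for a refinement $B'\subseteq B$ in which every pair is joined by $\gg M^{-O(1)}|A|^2|B|$ walks of length four through the popular graph,
\begin{equation*}
|B'-B'|\;\ll\;\frac{|P|^4}{M^{-O(1)}|A|^2|B|}\;\ll\;M^{O(1)}\,\frac{|A|^2}{|B|}\;=\;M^{O(1)}\,L^2\,|B|\,,
\end{equation*}
a polynomial loss in $L$, not $L^\eps$. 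No choice of popularity threshold repairs this: lowering it only enlarges $P$, and raising it destroys the positive edge-density supplied by the energy hypothesis. The $L^\eps$ in the statement, together with the $M^{O_\eps(1)}$ exponent that necessarily blows up as $\eps\to 0$, is the signature of an iterated refinement in which a fixed power of $L$ is traded against a fixed power of $M$ over $O(1/\eps)$ rounds; your sketch contains no such loop, and ``taking $H$ to be a Freiman-refined copy of $B'$'' is too vague to supply one. The covering half of your plan — Ruzsa covering plus Pl\"unnecke on a large subset of $A$ to produce $\L$ and verify \eqref{f:BSzG_as_1} and \eqref{f:BSzG_as_3} — is the right idea once a structured core $H$ with $L^\eps$ doubling is in hand, but it is exactly that core which the path-counting step, as written, fails to deliver.
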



{\it Proof of inequality (\ref{f:main_pred_1_new}).}
To get $\eps_0$ we need to improve (\ref{f:crude}), that is to show
$|A(A+A)| \ge |A+A|^{1+\eps}$, where $\eps >0$ is some (other) absolute constant.
Suppose not, then $\E^{*} (A,A+A) \gg_M |A|^2 |A+A|$, where $M=|A|^{\eps}$.
Using Theorem \ref{t:BSzG_as} with $B=A$, $A=A+A$, we find, in particular,
$H\subseteq A$ such that $|H| \gg_{M} |A|$ and
$|H H^{-1}| \ll_M |H|$.
Applying Theorem \ref{convex}, we see that
$$
    |A(A+A)| \ge |A+A| \ge |H+H| \gg_M |A|^{14/9} \,.
$$
This completes the proof. $\hfill\Box$

As one can see, the number $\eps_0$ from (\ref{f:main_pred_1_new}) is a result of using
of the asymmetric version of Balog--Szemer\'{e}di--Gowers theorem, and thus is
rather small.

Note that the sum-product estimates in \cite{LiORN2} are quantitatively better when the sum set is replaced by the difference set $A-A$. To be precise, it is proven in \cite{LiORN2} that
$$|A:A|^6|A-A|^5\gg{\frac{|A|^{14}}{(\log{|A|})^2}}.$$
Therefore, the argument of the proof of Theorem \ref{main1} outputs a slightly better bound for the set $A(A-A)$. One can check that
\begin{equation}
|A(A-A)|\gtrapprox{|A|^{\frac{3}{2}+\frac{1}{106}}}.
\label{A(A-A)}
\end{equation}
Again, the asymmetric version of the Balog-Szemer\'{e}di-Gowers Theorem can then be used as above to prove that
$$|A(A-A)|\gg{|A|^{\frac{3}{2}+\frac{1}{106}+\eps_0}},$$
where $\eps_0>0$ is an absolute constant.

\subsection*{Proof of Theorem \ref{main2}}
Recall that Theorem \ref{main2} states that  
$$|A(A+A+A+A)|\gg{\frac{|A|^2}{\log{|A|}}}.$$
The essential step in Solymosi's \cite{solymosi} work on the sum-product problem was to obtain an upper bound on the multiplicative energy in terms of the sum set, as follows:
\begin{equation}
\E^*(A)\ll{|A+A|^2\log{|A|}}.
\label{soly3}
\end{equation}

We mention this bound explicitly because it will now be used in the proof of Theorem \ref{main2}.

Apply Lemma \ref{mainlemma} with $B=C=A+A$. This implies that
$$\E^*(A)|A(A+A+A+A)|^2\gg{\frac{|A+A|^2|A|^4}{\log{|A|}}}.$$
Applying the upper bound \eqref{soly3} on $\E^*(A)$ and then rearranging yields
$$|A(A+A+A+A)|\gg{\frac{|A|^2}{\log{|A|}}}.$$
\begin{flushright}
\qedsymbol
\end{flushright}

\subsection*{Proof of Theorem \ref{main3}}
Recall that Theorem \ref{main3} states that  
$$|A(A+A+A)|\gtrapprox{|A|^{\frac{7}{4}+\frac{1}{284}}}.$$
For the ease of the reader, we begin by writing down a short proof of the fact that
\begin{equation}
|A(A+A+A)|\gtrapprox{\frac{|A|^{7/4}}{(\log{|A|})^{3/4}}}.
\label{crudeaim}
\end{equation}
First note that, since $r_{A:A}(x)\leq{|A|}$ for any $x$,
\begin{equation}\label{f:main3_E_E_3}
    \E^*_3(A)=\sum_{x\in{A:A}}r_{A:A}^3(x)\leq{|A|\sum_{x\in{A:A}}r_{A:A}^2(x)}=|A|\E^*(A) \,,
\end{equation}
so that \eqref{soly3} yields
\begin{equation}
\E^*_3(A)\ll{|A||A+A|^2\log{|A|}}.
\label{soly4}
\end{equation}
Now, apply Corollary \ref{E3}, with $B=A$ and $C=A+A$. We obtain
$$\E^*_3(A)|A(A+A+A)|^4\gg{\frac{|A|^8|A+A|^2}{(\log{|A|})^2}}.$$
Combining this with the upper bound on $\E^*_3(A)$ from \eqref{soly4}, it follows that
$$|A(A+A+A)|\gg{\frac{|A|^{7/4}}{(\log{|A|})^{3/4}}},$$
which proves \eqref{crudeaim}.

Now, we will show how a slightly more subtle argument can lead to a small improvement in this exponent. Apply \eqref{soly3} and Lemma \ref{mainlemma}, with $B=A$ and $C=A+A$, so that
\begin{equation}\label{tmp:29.10.2013_1}
    |A|^5|A+A|\lessapprox{\E^*(A)|A(A+A+A)|^2}\lessapprox{|A+A|^2|A(A+A+A)|^2} \,,
\end{equation}
and thus
\begin{equation}
|A+A||A(A+A+A)|^2\gtrapprox{|A|^5}.
\label{first}
\end{equation}
Write $\E^*(A)=\frac{|A|^3}{K}$, for some value $K\geq{1}$. By the first inequality from (\ref{tmp:29.10.2013_1}), it follows that
\begin{equation}
|A(A+A+A)|\gtrapprox{|A|K^{1/2}|A+A|^{1/2}} \,.
\label{second}
\end{equation}
Applying Solymosi's bound for the multiplicative energy then yields
\begin{equation}
|A(A+A+A)|\gtrapprox{|A|^{7/4}K^{1/4}}.
\label{second2}
\end{equation}
Now, by Theorem \ref{BSG} there exists a subset $A'\subset{A}$ such that
\begin{equation}
|A'|\gtrapprox{\frac{|A|}{K}}
\label{BSG1}
\end{equation}
and
\begin{equation}
|A':A'|\lessapprox{K^4\frac{|A'|^3}{|A|^2}}.
\label{BSG2}
\end{equation}
By Theorem \ref{convex} and \eqref{BSG2},
\begin{align*}
|A'|^{24}&\lessapprox{|A'+A'|^{9}|A':A'|^{10}}
\\&\ll{|A+A|^9K^{40}\frac{|A'|^{30}}{|A|^{20}}},
\end{align*}
and then
$$|A+A|^9\gtrapprox{\frac{|A|^{20}}{|A'|^6K^{40}}}\geq{\frac{|A|^{14}}{K^{40}}}.$$
From the latter inequality we now have $|A+A|\gtrapprox{\frac{|A|^{14/9}}{K^{40/9}}}$. Comparing this with \eqref{second} leads to the following bound:
\begin{equation}
|A(A+A+A)|\gtrapprox{\frac{|A|^{16/9}}{K^{31/18}}}.
\label{fourth}
\end{equation}
The worst case occurs when $K\approx{|A|^{1/71}}$. It can be verified that if $K<|A|^{1/71}$, then
$$|A(A+A+A)|\gtrapprox{|A|^{\frac{7}{4}+\frac{1}{284}}},$$
by inequality \eqref{fourth}. On the other hand, if $K\geq{|A|^{1/71}}$, then it follows from inequality
\eqref{second2} that
$$|A(A+A+A)|\gtrapprox{|A|^{\frac{7}{4}+\frac{1}{284}}}.$$
Therefore, we have proved that \eqref{A(A+A+A)} holds for all $K$ (i.e. for all possible values of $\E^*(A)$), which concludes the proof.

\begin{flushright}
\qedsymbol
\end{flushright}

\section{Proofs of Results on Products of Translates}
We record a short lemma which will be used in the proofs of Theorem \ref{translates2} and \ref{translates3}
\begin{lemma} \label{handylemma}
Let $A\subset{\mathbb{R}}$ be a finite set. Then, for any $x\in{\mathbb{R}}$,
$$|(A+x)(A+A)||A+A|\gg{\frac{|A|^3}{\log{|A|}}}.$$
\end{lemma}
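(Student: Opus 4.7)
The target inequality is essentially what we would get by combining Lemma \ref{mainlemma} with Solymosi's multiplicative energy bound \eqref{soly3}, applied to the translate $A+x$ rather than $A$ itself. The key observation is that neither the sum set nor the multiplicative energy structure that drives \eqref{soly3} cares about the particular translate: $(A+x)+(A+x) = (A+A)+2x$ has the same cardinality as $A+A$, and the cardinality of $A+x$ equals that of $A$.

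First I would invoke Lemma \ref{mainlemma} with the roles $A \mapsto A+x$ and $B = C = A$. Since $|A+x| = |A|$, this immediately yields
\[
\E^*(A+x)\,|(A+x)(A+A)|^2 \gg \frac{|A|^6}{\log |A|}.
\]
Next I would apply Solymosi's upper bound \eqref{soly3} to the set $A+x$ in place of $A$, observing that $(A+x)+(A+x)$ is a translate of $A+A$ and therefore has cardinality exactly $|A+A|$. This gives
\[
\E^*(A+x) \ll |A+A|^2 \log|A|.
\]
Substituting this into the previous display and cancelling a factor of $\log|A|$ leaves
\[
|A+A|^2\,|(A+x)(A+A)|^2 \gg \frac{|A|^6}{(\log|A|)^2},
\]
and extracting the square root produces the claimed bound.

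There is no real obstacle here; the whole content of the lemma is in recognizing that Lemma \ref{mainlemma} and Solymosi's estimate are both translation-invariant in the sense required, so that the elementary combination goes through for every $x \in \mathbb{R}$ uniformly. The only very mild care needed is to note that the logarithmic factors $\log|A+x|$ appearing along the way may all be replaced by $\log|A|$ since $|A+x| = |A|$.
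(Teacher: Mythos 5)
Your argument is correct and is essentially identical to the paper's proof: both combine Solymosi's bound \eqref{soly3} applied to the translate $A+x$ with Lemma \ref{mainlemma} applied to $A+x$ and $B=C=A$, then cancel the logarithm and take a square root. The paper merely writes the same chain of inequalities starting from $|A+A|^2 = |(A+x)+(A+x)|^2$ rather than substituting at the end, so there is no meaningful difference in method.
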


\begin{proof} Note that for any $x\in{\mathbb{R}}$
\begin{align}
|A+A|^2&=|(A+x)+(A+x)|^2
\\&\gg{\frac{\E^*(A+x)}{\log{|A|}}} \label{al1}
\\&\gg{\frac{|A|^6}{|(A+x)(A+A)|^2(\log{|A|})^2}}, \label{al2}
\end{align}
where \eqref{al1} is an application of Solymosi's bound \eqref{soly3}, and \eqref{al2} comes from Lemma \ref{mainlemma}. The lemma follows after rearranging this expression.
\end{proof}

\subsection*{Proof of Theorem \ref{translates1}}
Recall that Theorem \ref{translates1} states that 
$$|A(A+a)|\gg{|A|^{3/2}}$$
holds for at least half of the elements $a$ belonging to $A$. Lemma \ref{sum*} tells us that, for some fixed constant $C$
$$\sum_{a\in{A}}\E^*(A,a+A)\leq{C|A|^{7/2}}.$$
Let $A'\subset{A}$ be the set
$$A':=\{a\in{A} ~:~ \E^*(A,a+A)\leq{2C|A|^{5/2}}\},$$
and observe that
$$2C|A|^{5/2}|A\setminus{A'}|\leq{\sum_{a\in{A\setminus{A'}}}\E^*(A,a+A)}\leq{C|A|^{7/2}},$$
which implies that
$$|A\setminus{A'}|\leq{\frac{|A|}{2}}.$$
This implies that $|A'|\geq{\frac{|A|}{2}}$. To complete the proof, we will show that for every $a\in{A'}$ we have $|A(A+a)|\gg{|A|^{3/2}}$. To see this, simply observe that, for any $a\in{A'}$,
$$\frac{|A|^4}{|A(A+a)|}\leq{\E^*(A,A+a)}\ll{|A|^{5/2}}.$$
The lower bound here comes from \eqref{CS}, whilst the upper bound comes from the definition of $A'$. Rearranging this inequality gives
$$|A(A+a)|\gg{|A|^{3/2}},$$
as required.
\begin{flushright}
\qedsymbol
\end{flushright}

We remark that it is straightforward to adapt this argument slightly---switching the roles of addition and multiplication and using Lemma \ref{sum+} in place of Lemma \ref{sum*}---in order to show that there exists a subset $A'\subset{A}$, such that $|A'|\geq{\frac{|A|}{2}}$, with the property that
$$|A+aA|\gg{|A|^{3/2}},$$
for any $a\in{A'}$.

It is also easy to adapt the proof of Theorem \ref{translates1} in order to show that, for any $0<\epsilon<1$ and any $A\subset{\mathbb{R}}$, there exists a subset $A'\subset{A}$ such that $|A'|\geq{(1-\epsilon)|A|}$, and for all $a\in{A'}$,
$$|A(A+a)|\gg_{\epsilon}{|A|^{3/2}}.$$
In other words, the set $A(A+a)$ is large for all but a small positive proportion of elements $a\in{A}$. The analogous statement for $A+aA$ is also true.

\subsection*{Proof of Theorem \ref{translates2}}
Recall that Theorem \ref{translates2} states that  
$$|(A+a)(A+A)|\gg{\frac{|A|^{5/3}}{(\log{|A|})^{1/3}}}$$
holds for at least half of the elements $a$ belonging to $A$.
This proof is similar to the proof of Theorem \ref{translates1}. Again, Lemma \ref{sum*} tells us that for a fixed constant $C$, we have
$$\sum_{a\in{A}} \E^*(A+A,a+A)\leq{C|A|^2|A+A|^{3/2}}.$$
Define $A'\subset{A}$ to be the set
$$A':=\{a\in{A} ~:~ \E^*(A+A,a+A)\leq{2C|A||A+A|^{3/2}} \},$$
and observe that
$$2C|A||A+A|^{3/2}|A\setminus{A'}|\leq{\sum_{a\in{A\setminus{A'}}} \E^*(A+A,a+A)}\leq{C|A|^2|A+A|^{3/2}}.$$
This implies that $|A\setminus{A'}|\leq{\frac{|A|}{2}}$, and so
$$|A'|\geq{\frac{|A|}{2}}.$$
Next, observe that, for any $a\in{A'}$,
$$\frac{|A|^2|A+A|^2}{|(A+a)(A+A)|}\leq{\E^*(A+A,A+a)}\ll{|A||A+A|^{3/2}}.$$
The lower bound here comes from \eqref{CS}, whilst the upper bound comes from the definition of $A'$. After rearranging, we have
\begin{equation}
|(A+a)(A+A)|\gg{|A||A+A|^{1/2}},
\label{nearly}
\end{equation}
for any $a\in{A'}$. To complete the proof we need a useful lower bound on $|A+A|$. This comes from Lemma \ref{handylemma}, which tells us that for any $a\in{\mathbb{R}}$, and so certainly any $a\in{A}$,
$$|A+A|^{1/2}\gg{\frac{|A|^{3/2}}{(\log{|A|})^{1/2}|(A+a)(A+A)|^{1/2}}}.$$
Finally, this bound can be combined with \eqref{nearly}, to conclude that
$$|(A+a)(A+A)|\gg{\frac{|A|^{5/3}}{(\log{|A|})^{1/3}}},$$
as required.
\begin{flushright}
\qedsymbol
\end{flushright}

\subsection*{Another upper bound on the multiplicative energy}

\newcommand{\differentfont}[1]{\ensuremath{#1}}

Before proceeding to the proof of Theorem \ref{translates3}, it is necessary to establish another upper bound on the multiplicative energy. This is essentially a calculation, based on earlier work from \cite{GS} and \cite{TimORN}. We will need the following lemma:

\begin{lemma} \label{multiplicity} Suppose that $\differentfont{A},\differentfont{B}$ and $\differentfont{C}$ are finite subsets of $\mathbb{R}$\/ such that $0\not\in A,B$, and $\alpha\in{\mathbb{R}\setminus{\{0\}}}$. Then, for any integer $t\geq{1}$,
$$|\{s:r_{\differentfont{A}\differentfont{B}}(s)\geq{t}\}|\ll{\frac{|(\differentfont{A}+\alpha)\cdot{\differentfont{C}}|^2|\differentfont{B}|^2}{|\differentfont{C}|t^3}}.$$
\end{lemma}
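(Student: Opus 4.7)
The proof I would give is a Szemer\'edi--Trotter incidence argument of exactly the type used in the proofs of Lemmas \ref{sum*} and \ref{mainlemma}. Write $S_t = \{s : r_{AB}(s) \geq t\}$ for the set whose cardinality is being estimated, and assume without loss of generality that $0 \notin C$, since removing one point from $C$ cannot affect the claimed bound.

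First I would introduce the point set
\[
P = \{(1/b,y) : b \in B,\; y \in (A+\alpha)\cdot C\},
\]
of cardinality at most $|B|\cdot|(A+\alpha)C|$, together with the line family
\[
L = \{\ell_{s,c} : (s,c) \in S_t \times C\}, \qquad \ell_{s,c}:\; y = (sc)\,x + \alpha c.
\]
The key algebraic identity is that whenever $ab = s$,
\[
(sc)(1/b) + \alpha c = ac + \alpha c = (a+\alpha)c,
\]
so the point $(1/b,(a+\alpha)c) \in P$ lies on $\ell_{s,c}$. Consequently each line $\ell_{s,c}$ is $t$-rich. Because $\alpha \neq 0$ and $0 \notin C$, the slope $sc$ and intercept $\alpha c$ of $\ell_{s,c}$ together determine the pair $(s,c)$, so the lines are pairwise distinct and $|L| = |S_t|\cdot|C|$.

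Applying the Szemer\'edi--Trotter theorem (Theorem \ref{ST1}) to the incidence bound $I(P,L) \geq t|L|$, and rearranging in the style of Corollary \ref{STcor}, would yield
\[
|L| \ll \frac{|P|^2}{t^3} + \frac{|P|}{t} = \frac{|B|^2|(A+\alpha)C|^2}{t^3} + \frac{|B|\cdot|(A+\alpha)C|}{t},
\]
and dividing by $|C|$ delivers the desired inequality, provided that the second, lower-order summand can be absorbed into the first. The main obstacle is precisely this absorption, and it is handled in the standard way: each line $\ell_{s,c}$ meets $P$ in at most $r_{AB}(s) \leq \min(|A|,|B|)$ points, and combined with the trivial estimate $|(A+\alpha)C| \geq |A|$ this keeps the maximum incidence per line below $|P|^{1/2}$ in the relevant regime, so the sharper form of Corollary \ref{STcor} applies and removes the $|P|/t$ term. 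Any remaining boundary cases are dispatched by the same dyadic truncation of the incidence sum that was used in the proofs of Lemmas \ref{sum+} and \ref{sum*}.
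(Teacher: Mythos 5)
Your argument is the projective dual of the paper's: there, the large family consists of lines $l_{p,b}$ indexed by $(p,b)\in\bigl((A+\alpha)C\bigr)\times B$, and one shows that the points $(1/c,s)$ with $c\in C$, $s\in S_t$ are all $t$-rich; you instead take the large family to be the point set $P=\{(1/b,y)\}$ and show the lines $\ell_{s,c}$ are $t$-rich. Both rest on the identical algebraic identity $ab=s\Leftrightarrow (a+\alpha)c=(sc)(1/b)+\alpha c$, and since Theorem \ref{ST1} is symmetric in $P$ and $L$ the two packagings are interchangeable; in this sense the proof is essentially the paper's.

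One detail needs fixing. You assert that each line $\ell_{s,c}$ meets $P$ in at most $r_{AB}(s)$ points, but that is not correct: $(1/b,y)\in P$ lies on $\ell_{s,c}$ exactly when $(s/b+\alpha)c\in (A+\alpha)C$, which does not force $s/b\in A$. The honest trivial bound is that $\ell_{s,c}$ meets $P$ in at most $|B|$ points (one per $x$-coordinate $1/b$), and $|B|\le |P|^{1/2}=(|B|\,|(A+\alpha)C|)^{1/2}$ only when $|B|\le |(A+\alpha)C|$, which can fail. The absorption of the $|P|/t$ term is nevertheless easy and does not need the sharpened Corollary: $S_t$ is empty unless $t\le\min\{|A|,|B|\}$, so $t^2\le |A||B|\le |(A+\alpha)C|\,|B|=|P|$ (using $|(A+\alpha)C|\ge |A|$), whence $|P|/t\le |P|^2/t^3$. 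This is essentially how the paper handles it, noting that the lower-order Szemer\'edi--Trotter term can only dominate when $t>\min\{|A|,|B|\}$, a range in which the lemma is vacuous. Finally, since you count rich lines of a point set rather than rich points of a line family, you need the dual of Corollary \ref{STcor} (true by the same proof); it is worth stating that explicitly rather than citing the corollary as written.
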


This statement is a slight generalisation of Lemma 3.2 in \cite{TimORN}. We give the proof here for completeness.

\begin{proof} For some values $p$ and $b$, define the line $l_{p,b}$ to be the set $\{(x,y):y=(px-\alpha)b\}$.
Let $\differentfont{L}$ be the family of lines
$$\differentfont{L}:=\{l_{p,b}:p\in{(\differentfont{A}+\alpha)\differentfont{C}},b\in{\differentfont{B}}\}.$$
Observe that, since $\alpha$ is non-zero, $|\differentfont{L}|=|(\differentfont{A}+\alpha)\differentfont{C}||\differentfont{B}|.$\footnote{Note that it is not true in general that $|L|=|(A+\alpha)C||B|$. Indeed, if $0\in{B}$, then $l_{p,0}=l_{p',0}$ for $p\neq{p'}$, and so the lines may not all be distinct. However, we may assume again that zero does not cause us any problems. To be more precise, we assume that $0\notin{\differentfont{B}}$, as otherwise $0$ can be deleted, and this will only slightly change the implied constants in the statement of the lemma. If $0\notin{B}$, then the statement that $|L|=|(A+\alpha)C||B|$ is true.} Let $P_t$ denote the set of all $t$-rich points in the plane. By Corollary \ref{STcor}, for $t\geq{2}$,

\begin{equation}
|P_t|\ll{\frac{|\differentfont{B}|^2|(\differentfont{A}+\alpha)\differentfont{C}|^2}{t^3}+\frac{|\differentfont{B}||(\differentfont{A}+\alpha)\differentfont{C}|}{t}},
\label{ST3}
\end{equation}

and it can once again be simply assumed that

\begin{equation}
|P_t|\ll{\frac{|\differentfont{B}|^2|(\differentfont{A}+\alpha)\differentfont{C}|^2}{t^3}}.
\label{ST4}
\end{equation}

This is because, if the second term from \eqref{ST3} is dominant, it must be the case
$$t>|(\differentfont{A}+\alpha)\differentfont{C}|^{1/2}|\differentfont{B}|^{1/2}\geq{\min{\{|\differentfont{A}|,|\differentfont{B}|\}}}.$$
However, in such a large range, $|\{s:r_{\differentfont{AB}}(s)\geq{t}\}|=0$, and so the statement of the lemma is trivially true.

Next, it will be shown that for every $s\in{\{s:r_{\differentfont{AB}}(s)\geq{t}\}}$, and for every element $c\in{\differentfont{C}}$,
\begin{equation}
\left(\frac{1}{c},s\right)\in{P_t}.
\label{finally}
\end{equation}

Once, \eqref{finally} has been established, it follows that $|P_t|\geq{|\differentfont{C}||\{s:r_{\differentfont{AB}}(s)\geq{t}\}|}$. Combining this with \eqref{ST4}, it follows that
\begin{equation}
|\{s:r_{\differentfont{AB}}(s)\geq{t}\}|\ll{\frac{|\differentfont{B}|^2|(\differentfont{A}+\alpha)\cdot{\differentfont{C}}|^2}{|\differentfont{C}|t^3}},
\label{SThere}
\end{equation}
for all $t\geq{2}$. We can then check that \eqref{SThere} is also true in the case when $t=1$, since
$$\frac{|B|^2|(A+\alpha)C|^2}{1^3|C|}\geq{|B|^2|(A+\alpha)C|}\geq{|AB|}=|\{s:r_{AB}(s)\geq{1}\}|.$$

It remains to establish \eqref{finally}. To do so, fix $s$ with $r_{\differentfont{AB}}(s)\geq{t}$ and $c\in{\differentfont{C}}$. The element $s$ can be written in the form $s=a_1b_1=\cdots=a_tb_t$. For every $1\leq{i}\leq{t}$ we have
\begin{align*}
s&=a_ib_i
\\&=(a_i+\alpha-\alpha)b_i
\\&=\left(\frac{(a_i+\alpha)c}{c}-\alpha\right)b_i,
\end{align*}
which means that $\left(\frac{1}{c},s\right)$ belongs to the line $l_{(a_i+\alpha)c,b_i}$. As $i$ varies from $1$ through to $t$ this is still true, and it is also true that the lines $l_{(a_i+\alpha)c,b_i}$ are distinct for all values of $i$ in this range. Therefore, $\left(\frac{1}{c},s\right)\in{P_t}$, as claimed. This concludes the proof.

\end{proof}

We use this to prove another lemma:

\begin{lemma} \label{etranslates} For any finite sets $\differentfont{A}$ and $\differentfont{C}$ in $\mathbb{R}$, and any $\alpha\in{\mathbb{R}\setminus{\{0\}}}$,
$$\E^*(\differentfont{A})\ll{\frac{|(\differentfont{A}+\alpha)\differentfont{C}||\differentfont{A}|^2}{|\differentfont{C}|^{1/2}}}.$$
\end{lemma}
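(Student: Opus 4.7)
The plan is to apply Lemma \ref{multiplicity} with $B=A$ and then extract the bound on $\E^*(A)$ by a standard dyadic decomposition combined with a trivial truncation and an optimal choice of threshold.

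First, I would observe that Lemma \ref{multiplicity}, applied with $B=A$, gives for every integer $t\geq 1$
\[
|\{s : r_{AA}(s)\geq t\}| \ll \frac{|(A+\alpha)C|^2\,|A|^2}{|C|\,t^3}.
\]
Next, I would introduce a threshold $T\geq 1$ and split
\[
\E^*(A)=\sum_{s}r_{AA}^2(s)=\sum_{s:\, r_{AA}(s)\leq T}r_{AA}^2(s)+\sum_{s:\, r_{AA}(s)>T}r_{AA}^2(s).
\]
For the low part, I would bound $r_{AA}(s)^2\leq T\cdot r_{AA}(s)$ and use $\sum_s r_{AA}(s)=|A|^2$ to get $\leq T|A|^2$. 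For the high part, I would decompose dyadically into the level sets $\{s:2^{j-1}<r_{AA}(s)\leq 2^j\}$ with $2^j>T$ and apply the displayed bound above; the sum telescopes as a geometric series dominated by the smallest $j$, yielding
\[
\sum_{s:\, r_{AA}(s)>T}r_{AA}^2(s)\ll\frac{|(A+\alpha)C|^2|A|^2}{|C|\,T}.
\]

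Finally, I would optimize the parameter by balancing the two terms, i.e. setting
\[
T\approx\frac{|(A+\alpha)C|}{|C|^{1/2}},
\]
which produces the claimed bound
\[
\E^*(A)\ll\frac{|(A+\alpha)C|\,|A|^2}{|C|^{1/2}}.
\]
One should check that this choice of $T$ is $\geq 1$ (so that the trivial bound is usable); if not, the conclusion is trivial from $\E^*(A)\leq |A|^3$. There is no genuine obstacle here: the whole argument is a textbook moment bound once Lemma \ref{multiplicity} is in hand, and the Szemer\'edi--Trotter input is already encapsulated in that lemma.
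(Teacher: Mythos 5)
Your proof is correct and matches the paper's own proof almost line for line: apply Lemma \ref{multiplicity} with $B=A$, split the second moment at a threshold, bound the low part by $T|A|^2$, bound the high part via dyadic decomposition of the level sets to get $\ll |(A+\alpha)C|^2|A|^2/(|C|T)$, and balance by choosing $T\approx |(A+\alpha)C|/|C|^{1/2}$ (which is automatically $\geq 1$). The only cosmetic difference is that you work with $r_{AA}$ throughout, consistent with the hypothesis of Lemma \ref{multiplicity}, whereas the paper writes $r_{A:A}$; by the identity $\E^*(A)=\sum_x r_{AA}^2(x)=\sum_x r_{A:A}^2(x)$ this is immaterial.
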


\begin{proof} Let $\triangle\geq{1}$ be a fixed integer to be specified later. Observe that,
$$\E^*(\differentfont{A})=\sum_{x}r_{\differentfont{A}:\differentfont{A}}^2(x)=\sum_{x\,:\,r_{\differentfont{A}:\differentfont{A}}(x) \le \triangle}r_{\differentfont{A}:\differentfont{A}}^2(x)+\sum_{x:r_{\differentfont{A}\,:\,\differentfont{A}}(x) > {\triangle}}r_{\differentfont{A}:\differentfont{A}}^2(x).$$
The first term is bounded by
$$\sum_{x\,:\,r_{\differentfont{A}:\differentfont{A}}^2(x) \le \triangle}r_{\differentfont{A}:\differentfont{A}}^2(x)\leq{\triangle\sum_{x}r_{\differentfont{A}:\differentfont{A}}(x)}=\triangle|\differentfont{A}|^2.$$
For the second term, apply a dyadic decomposition and use Lemma \ref{multiplicity} as follows:
\begin{align*}
\sum_{x:r_{\differentfont{A}:\differentfont{A}}(x)>{\triangle}}r_{\differentfont{A}:\differentfont{A}}^2(x)&=\sum_{j}\sum_{x\,:\,2^{j-1}\triangle<{r_{\differentfont{A}:\differentfont{A}}(x)} \le 2^j\triangle}r_{\differentfont{A}:\differentfont{A}}^2(x)
\\&\ll{\sum_{j}(2^j\triangle)^2\frac{|(\differentfont{A}+\alpha)\differentfont{C}|^2|\differentfont{A}|^2}{|\differentfont{C}|(2^j\triangle)^3}}
\\&=\frac{|(\differentfont{A}+\alpha)\differentfont{C}|^2|\differentfont{A}|^2}{|\differentfont{C}|\triangle}.
\end{align*}
This shows that
$$\E^*(\differentfont{A})\ll{\triangle|\differentfont{A}|^2+\frac{|(\differentfont{A}+\alpha)\differentfont{C}|^2|\differentfont{A}|^2}{|\differentfont{C}|\triangle}},$$
and we optimise by setting $\triangle=\left\lceil\frac{|(\differentfont{A}+\alpha)\differentfont{C}|}{|\differentfont{C}|^{1/2}}\right\rceil\approx{\frac{|(A+\alpha)C|}{|C|^{1/2}}}$. This shows that
$$\E^*(\differentfont{A})\ll{\frac{|(\differentfont{A}+\alpha)\differentfont{C}||\differentfont{A}|^2}{|\differentfont{C}|^{1/2}}},$$
as required.
\end{proof}

\subsection*{Proof of Theorem \ref{translates3}}

Let $a$ and $b$ be distinct real numbers. We will show that

\begin{equation}
|(A+a)(A+A)|^5|(A+b)(A+A)|^2\gg{\frac{|A|^{11}}{(\log{|A|})^3}}.
\label{aim2}
\end{equation}

Once we have established \eqref{aim2}, the theorem follows, since this implies that for any $a,b\in{\mathbb{R}}$ with $a\neq{b}$, we have
\begin{equation}
\max{\{|(A+a)(A+A)|,|(A+b)(A+A)|\}}\gg{\frac{|A|^{11/7}}{(\log{|A|})^{3/7}}},
\label{aim3}
\end{equation}
and therefore, there may exist at most one value $x\in{\mathbb{R}}$ which violates the inequality
$$|(A+x)(A+A)|\gg{\frac{|A|^{11/7}}{(\log{|A|})^{3/7}}}.$$
It remains to prove \eqref{aim2}. First, apply Lemma \ref{etranslates} with $A=A+a$, $C=A+A$ and $\alpha=b-a\neq{0}$. This yields  
\begin{align}
\E^*(A+a)&\ll{\frac{|(A+a+(b-a))(A+A)||A|^2}{|A+A|^{1/2}}}
\\&=\frac{|(A+b)(A+A)||A|^2}{|A+A|^{1/2}}. \label{upper}
\end{align}
Meanwhile, Lemma \ref{mainlemma} informs us that
\begin{equation}
\E^*(A+a)\gg{\frac{|A|^6}{|(A+a)(A+A)|^2\log{|A|}}},
\label{lower}
\end{equation}
and combining \eqref{upper} and \eqref{lower}, we have
\begin{equation}
|(A+a)(A+A)|^2|(A+b)(A+A)|\gg{\frac{|A|^4|A+A|^{1/2}}{\log{|A|}}}.
\label{nearly2}
\end{equation}
Finally, we apply Lemma \ref{handylemma} which tells us that
$$|A+A|^{1/2}\gg{\frac{|A|^{3/2}}{(\log{|A|})^{1/2}|(A+a)(A+A)|^{1/2}}}.$$
Plugging this bound into \eqref{nearly2} and rearranging, it follows that
$$|(A+a)(A+A)|^5|(A+b)(A+A)|^2\gg{\frac{|A|^{11}}{(\log{|A|})^3}}.$$
Thus we have established \eqref{aim2}, which completes the proof.

\begin{flushright}
\qedsymbol
\end{flushright}

\subsection*{Proof of Theorem \ref{t:main_intr_II}}
\label{sec:further}
 
Before we prove Theorem \ref{t:main_intr_II}, we need some auxiliary statements.
First we note a corollary of the proof of Lemma \ref{mainlemma}.
\begin{corollary} 
  \label{mainlemmacor}
  Let $A,B,$ and $C$ be finite subsets of $\mathbb{R}$, and let
\[
S^\star=|\{(a,b,c,a',b',c')\in(A\times B\times C)^2\colon a(b+c)=a'(b'+c')\not=0\}| 
\]
Then
\begin{equation}
S^\star=  \sum_{x\not=0}r_{A(B+C)}^2(x)\ll \E_2^*(A)^{1/2}|B|^{3/2}|C|^{3/2}(\log|A|)^{1/2}.
\end{equation}
\end{corollary}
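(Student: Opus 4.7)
The plan is essentially to recognise that the desired bound on $S^\star$ was already proved \emph{en route} to Lemma \ref{mainlemma}; the corollary simply isolates the upper-bound half of that argument (the lower bound via Cauchy--Schwarz on $|A(B+C)|$ is discarded). So my proof would be a clean extraction and re-statement of three steps already present in the proof of Lemma \ref{mainlemma}.

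First, I would rewrite each non-zero solution $a(b+c) = a'(b'+c')$ in the ratio form
\[
\frac{a}{a'} \;=\; z \;=\; \frac{b'+c'}{b+c},
\]
which is legitimate because we have excluded the zero contribution. Setting $Q = (B+C)/(B+C)$ and letting $r_Q(z)$ count quadruples $(b,b',c,c')\in B\times B\times C\times C$ with $(b'+c')/(b+c)=z$, this transforms the count into
\[
S^\star \;=\; \sum_{z\in(A:A)\cap Q} r_{A:A}(z)\, r_Q(z).
\]

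Second, I would apply the Cauchy--Schwarz inequality to obtain
\[
S^\star \;\leq\; \E_2^*(A)^{1/2}\left(\sum_{z} r_Q^2(z)\right)^{1/2}.
\]
Third, I would invoke the distributional estimate for $r_Q$ already proven inside the proof of Lemma \ref{mainlemma} (the internal claim, established via Lemma \ref{sum+}): for every $t \geq 1$, $|\{z: r_Q(z)\geq t\}| \ll |B|^3|C|^3/t^2$. A dyadic decomposition then yields
\[
\sum_{z} r_Q^2(z) \;\ll\; |B|^3|C|^3 \log|A|,
\]
and substituting this into the Cauchy--Schwarz bound immediately gives the claimed inequality.

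There is no genuine obstacle here, since every ingredient is already in place: the rewriting, the Cauchy--Schwarz step, and the distributional estimate together reproduce \eqref{aim} verbatim. The only mild care needed is to verify that the equality $S^\star = \sum_{x\neq 0} r_{A(B+C)}^2(x)$ is indeed the correct count of sextuples, which is immediate from the definition of the representation function once we restrict $x \neq 0$.
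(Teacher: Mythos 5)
Your proposal is correct and takes essentially the same route as the paper: the paper itself introduces Corollary~\ref{mainlemmacor} as ``a corollary of the proof of Lemma~\ref{mainlemma},'' meaning precisely the isolation of the upper bound \eqref{aim} on $S^\star$ (ratio rewriting, Cauchy--Schwarz against $\E_2^*(A)$, and the dyadic decomposition using the distributional claim $|Z_t|\ll |B|^3|C|^3/t^2$), which is exactly what you re-derive.
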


Now recall a lemma from \cite{SS3}.

\begin{lemma}
\label{corpop}
    Let $A$ be a subset of an abelian group, $P_* \subseteq A-A$ and $\sum_{s\in P_*} |A_s| = \eta |A|^2$, $\eta \in (0,1]$.
    Then
    \begin{equation*}
        \sum_{s\in P_*} |A\pm A_s| \geq \frac{\eta^2 |A|^6}{\E_3(A) } \,.
    \end{equation*}
\end{lemma}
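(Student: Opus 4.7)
My plan is a double application of Cauchy--Schwarz, followed by a combinatorial identity that recognises $\sum_{s} \E^+(A,A_s)$ as $\E_3(A)$.

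First, for each $s\in P_*$, I would apply the standard energy-based lower bound on a sumset/difference set. Since
\[
\sum_x r_{A\pm A_s}(x)=|A||A_s|\qquad\text{and}\qquad \sum_x r_{A\pm A_s}^2(x)=\E^+(A,A_s),
\]
Cauchy--Schwarz gives $(|A||A_s|)^2\le |A\pm A_s|\cdot \E^+(A,A_s)$, so
\[
|A\pm A_s|\ge \frac{|A|^2|A_s|^2}{\E^+(A,A_s)}.
\]

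Next I would sum over $s\in P_*$ and apply Cauchy--Schwarz a second time, using the identity $\bigl(\sum u_s\bigr)^2\le \sum (u_s^2/v_s)\sum v_s$ with $u_s=|A_s|$ and $v_s=\E^+(A,A_s)$:
\[
\Bigl(\sum_{s\in P_*}|A_s|\Bigr)^2\le \Bigl(\sum_{s\in P_*}\tfrac{|A_s|^2}{\E^+(A,A_s)}\Bigr)\Bigl(\sum_{s\in P_*}\E^+(A,A_s)\Bigr).
\]
Using the hypothesis $\sum_{s\in P_*}|A_s|=\eta|A|^2$ and the previous step, this yields
\[
\sum_{s\in P_*}|A\pm A_s|\ge \frac{\eta^2|A|^6}{\sum_{s\in P_*}\E^+(A,A_s)}.
\]

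The remaining step --- and the only genuinely combinatorial one --- is the upper bound $\sum_{s\in P_*}\E^+(A,A_s)\le \E_3(A)$. I would prove this by expanding the definition: $b_i\in A_s$ iff $b_i,b_i+s\in A$, so setting $c_i=b_i+s$ the sum counts $6$-tuples $(a_1,a_2,b_1,b_2,c_1,c_2)\in A^6$ subject to $a_1+b_1=a_2+b_2$ and $c_1-b_1=c_2-b_2$ (with $s$ then determined). These two constraints force a single common value $d:=a_1-a_2=b_2-b_1=c_2-c_1$, and using the symmetry $r_{A-A}(d)=r_{A-A}(-d)$ the count becomes $\sum_d r_{A-A}(d)^3=\E_3(A)$; restricting to $s\in P_*$ can only decrease it. Substituting back gives the claimed inequality.

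The plan has no real obstacle: the first two steps are textbook Cauchy--Schwarz manipulations, and the only place one might slip is in bookkeeping the six variables and two linear constraints in the identity for $\sum_s\E^+(A,A_s)$. The point to be careful about is that this identity is an equality for the full sum over $s\in A-A$, so restricting to $P_*$ gives $\le \E_3(A)$ as needed, with no loss from the restriction appearing in the final bound.
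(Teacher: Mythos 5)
Your proof is correct, and since the paper simply cites this lemma from Schoen--Shkredov \cite{SS3} without reproducing a proof, your double Cauchy--Schwarz argument combined with the identity $\sum_{s\in A-A}\E^+(A,A_s)=\E_3(A)$ is exactly the standard route (and, to the best of my recollection, the one in \cite{SS3}). The sign bookkeeping in the six-variable identity is handled correctly, and the observation that the identity is an equality over all $s\in A-A$ (so restricting to $P_*$ gives the needed inequality) is the right point to flag.
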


\begin{corollary}
    Let $A$ be a subset of an abelian group.
    Then
\begin{equation}\label{f:E(A+A)_1}
    \E(A+A) \ge |A-A|^{-1} |A\m A + \D(A)|^2 \ge |A|^2 \max\{ |A-A|, |A+A| \} \,,
\end{equation}
\begin{equation}\label{f:E(A+A)_1.5}
    \E(A-A) \ge |A-A|^{-1} |A\m A - \D(A)|^2 \ge |A|^2 |A-A| \,,
\end{equation}
    and
\begin{equation}\label{f:E(A+A)_2}
    \E(A \pm A) \ge \frac{|A|^{12}}{\E^2_3 (A) |A-A|} \,.
\end{equation}
\label{c:E(A+A)}
\end{corollary}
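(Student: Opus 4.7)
\textbf{Proof proposal for Corollary \ref{c:E(A+A)}.}
My plan is to derive all three inequalities from a single common framework: the Katz--Koester trick, Cauchy--Schwarz, and the identity (\ref{f:A^2-D(A)}).

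First, observe that $\E(B) = \sum_s r_{B-B}^2(s)$ for any finite set $B$ in an abelian group. Taking $B = A \pm A$ and applying the Katz--Koester estimates recalled in the introduction, we get $r_{(A+A)-(A+A)}(s) = |(A+A)\cap(A+A-s)| \geq |A+A_s|$, and, after the harmless reindexing $s \mapsto -s$ in the analogous subtractive version, $r_{(A-A)-(A-A)}(s) \geq |A - A_{-s}|$. Summing squares (and relabelling in the $-$ case) yields
\[
\E(A \pm A) \geq \sum_{s\in A-A} |A \pm A_s|^2.
\]
Cauchy--Schwarz over the support $A-A$ combined with identity (\ref{f:A^2-D(A)}) then gives
\[
\E(A \pm A) \;\geq\; \frac{1}{|A-A|}\Big(\sum_s |A \pm A_s|\Big)^2 \;=\; \frac{|A^2 \pm \D(A)|^2}{|A-A|} \,,
\]
which is the first inequality in both (\ref{f:E(A+A)_1}) and (\ref{f:E(A+A)_1.5}).

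For the second inequalities, it suffices to prove the elementary lower bounds $|A^2 \pm \D(A)| \geq |A|\cdot|A-A|$ (both signs) and $|A^2 + \D(A)| \geq |A|\cdot|A+A|$. The first follows from the trivial estimate $|A \pm A_s| \geq |A|$ for every $s \in A-A$, combined with (\ref{f:A^2-D(A)}). For the second, I would project $A^2 + \D(A) = \{(a+c, b+c) : a,b,c\in A\}$ onto its second coordinate: for any $y = b+c \in A+A$, the set $\{(a+c, y) : a \in A\}$ contributes $|A|$ distinct elements to the fibre over $y$, and summing over $y$ gives the claimed bound. Squaring and dividing by $|A-A|$ then yields the required estimates, using that $M^2/|A-A| \geq M$ whenever $M \geq |A-A|$.

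Finally, for (\ref{f:E(A+A)_2}), I would invoke Lemma \ref{corpop} with $P_* = A-A$: since $\sum_s |A_s| = |A|^2$, the parameter $\eta$ equals $1$, and the lemma gives
\[
\sum_{s} |A \pm A_s| \;\geq\; \frac{|A|^6}{\E_3(A)} \,.
\]
Combining this with the Cauchy--Schwarz bound $\sum_s |A \pm A_s|^2 \geq |A-A|^{-1}(\sum_s |A \pm A_s|)^2$ and the opening inequality $\E(A \pm A) \geq \sum_s |A \pm A_s|^2$ produces (\ref{f:E(A+A)_2}). The proof is largely mechanical once the Katz--Koester setup is in place; the only real care required is tracking the sign/index conventions through the $A-A$ case, which is routine.
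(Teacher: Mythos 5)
Your proof is correct and follows essentially the same route as the paper: Katz--Koester to lower-bound $r_{(A\pm A)-(A\pm A)}(s)$ by $|A\pm A_s|$, Cauchy--Schwarz over the support $A-A$ together with identity~\eqref{f:A^2-D(A)}, and Lemma~\ref{corpop} with $P_*=A-A$, $\eta=1$ for \eqref{f:E(A+A)_2}. The one cosmetic difference is in how you obtain the inner bound $|A\m A + \D(A)|\ge |A||A+A|$: you count fibres of the projection onto the second coordinate directly, whereas the paper arrives at the same estimate via the identity $|A\m A+\D(A)|=\sum_{x\in A+A}|A+(A\cap(x-A))|$ and then applies a different lower bound to each of the two factors of $|A\m A+\D(A)|^2$ before dividing by $|A-A|$; both give the same conclusion.
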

\begin{proof}
We prove the statement for sums, the result for differences can be obtained similarly.
Put $S=A+A$ and $D=A-A$.
By Katz--Koester trick \cite{kk}, we get
$$
    |(A + A) \cap (A + A - s)| \ge |A + A_{s}| \,,
$$
and
\[
    |(A - A) \cap (A - A - s)| \ge |A - (A\cap (A+s))| \,.
\]
Thus by the Cauchy--Schwarz inequality
\begin{align*}
  \E(S) &\ge \sum_{s\in D} r_{S-S}^2 (s) \ge \sum_{s\in D} |A + A_s|^2 \ge |D|^{-1} \left( \sum_{s\in D} |A+A_s| \right)^2\\
  &=|D|^{-1} |A \m A + \D(A)|^2\ge |A|^2 |A-A| \,,
\end{align*}
and, similarly,
\begin{align*}
  \E(S) &\ge |D|^{-1} |A \m A + \D(A)|^2
  \ge |D|^{-1} \left( \sum_{x\in S} |A+(A\cap (x-A))| \right) \left( \sum_{s\in D} |A+A_s| \right)\\
  &\ge |A|^2 |A+A|
\end{align*}
as required.
Here we have used the fact
$$
    |A \m A + \D(A)| = \sum_{s\in D} |A+A_s| = \sum_{x\in S} |A+(A\cap (x-A))| \,,
$$
which follows from the consideration of the projections of the set $A \m A + \D(A)$.
More precisely, one has $A \m A + \D(A) = \{ (a_1+a,a_2+a) ~:~ a,a_1,a_2\in A \}$.
Whence, writing $s=(a_1+a) - (a_2+a) = a_1-a_2 \in D$, we get $a_2 \in A_s$, $a+a_2 \in A+A_s$ and viceversa.
Similarly, put $x=a_1+a_2 \in S$, one get  $a_2 \in A\cap (x-A)$, $a+a_2 \in A+(A\cap (x-A))$ and viceversa.

Further, by Lemma \ref{corpop}
$$
    |A|^6 \le \E_3 (A) \sum_x D(x) r_{S-S} (x) \,.
$$
Applying the Cauchy--Schwarz inequality, we get
$$
    |A|^{12} \le \E^2_3 (A) \E (S) |D|
$$
and formula (\ref{f:E(A+A)_2}) follows.
The result for the set $D$ is similar.
\end{proof}

Finally, we can prove Theorem \ref{t:main_intr_II}:
\begin{proof}[Proof of Theorem \ref{t:main_intr_II}]
  We begin with the first formula of the result.
   
Take $C=A-B$ in Corollary \ref{mainlemmacor}.
Note that $r_{(A-B)+B}(a)\geq |B|$ for all $a\in A$, which implies that $r_{A(B+C)}(x)\geq r_{AA}(x)|B|$.
Thus by Corollary \ref{mainlemmacor} we have\footnote{Note that $r_{AA}(x)=0$ for $x=0$ since we have assumed that $0\not\in A$.}
\[
|B|^2\E^*_2(A)\leq\sum_{x\not=0}r_{A(B+C)}^2(x)\ll \E^*_2(A)^{1/2}|B|^{3/2}|A-B|^{3/2}(\log|A|)^{1/2}.
\]
Rearranging and applying the Cauchy-Schwarz lower bound for $\E_2^*(A)$ yields
\[
\frac{|A|^4|B|}{|AA^{\pm 1}|}\leq |B|\E_2^*(A)\ll |A-B|^3\log|A|,
\]
as required.

Combining (\ref{f:main_intr_2_new}) with Corollary \ref{c:E(A+A)}, we obtain (\ref{f:main_intr_2'-_new}).
 This
 completes the proof.
\end{proof}


\section*{Concluding remarks - the complex case}
We conclude by pointing out that almost all of the results in this paper also hold in the more general case whereby $A$ is a finite set of complex numbers, since the tools we have made use of can all be extended in this direction. Indeed, the Szemer\'{e}di-Trotter was extended to points and lines in $\mathbb{C}^2$ by Toth \cite{toth}. More modern proofs have recently appeared due to Zahl \cite{zahl}, and Solymosi-Tao \cite{solytao}, although the latter of these results has exponents which are infinitesimally worse. The other main tool which has been imported to this paper is Solymosi's \cite{solymosi} bound on the multiplicative energy (which we earlier labelled \eqref{soly3}). This result was recently extended to the case when $A\subset{\mathbb{C}}$ by Konyagin and Rudnev \cite{KR}.
 

\section*{Acknowledgements}
We would like to thank Antal Balog and Tomasz Schoen for several helpful conversations, and Misha Rudnev for helping to significantly simplify the proof of Lemma \ref{mainlemma}. We are grateful to M. Z. Garaev for informing us about Theorem \ref{BSG}.

\noindent{Department of Mathematics\\ University of Rochester\\
Rochester, NY 14620\\
{\tt murphy@math.rochester.edu}

\noindent{Department of Mathematics and Statistics,\\ University of Reading,\\
Whiteknights, Reading\\
RG6 6AX\\
{\tt o.rochenewton@gmail.com}

\noindent{Division of Algebra and Number Theory,\\ Steklov Mathematical
Institute,\\
ul. Gubkina, 8, Moscow, Russia, 119991\\
and
\\
IITP RAS,  \\
Bolshoy Karetny per. 19, Moscow, Russia, 127994\\}
{\tt ilya.shkredov@gmail.com}

\end{document}